\newtheorem{theorem}{Theorem}
\newtheorem{proposition}{Proposition}
\newtheorem{lemma}{Lemma}
\newtheorem{corollary}{Corollary}
\newtheorem{definition}{Definition}
\theoremstyle{remark}
\newtheorem{remark}{Remark}
\newtheorem{example}{Example}
\newcommand{\C}{\mathbb{C}}
\newcommand{\supp}{\text{supp}}
\newcommand{\D}{\Omega}
\newcommand{\Dc}{\overline{\Omega}}
\newcommand{\dbar}{\overline{\partial}}
\newcommand{\zb}{\overline{z}}
\newcommand{\wb}{\overline{w}}
\newcommand{\ep}{\varepsilon}
\title[Berezin Regularity of Domains in $\mathbb{C}^n$]{Berezin 
	regularity of domains in $\mathbb{C}^n$ and the essential 
	norms of Toeplitz operators}
\author{\v{Z}eljko \v{C}u\v{c}kovi\'c}
\author{S\"{o}nmez \c{S}ahuto\u{g}lu}
\email{Zeljko.Cuckovic@utoledo.edu, Sonmez.Sahutoglu@utoledo.edu}
\address{University of Toledo, Department of Mathematics \& Statistics, 
Toledo, OH 43606, USA}
\subjclass[2010]{Primary  47B35; Secondary 32W05}
\keywords{Berezin transform, Bergman kernel, $\dbar$-Neumann operator, 
	convex domain, pseudoconvex domain}
\date{\today}
\begin{document}

\begin{abstract}
For the open unit disc $\mathbb{D}$ in the complex plane, it is well 
known that if $\phi \in C(\overline{\mathbb{D}})$ then its Berezin transform 
$\widetilde{\phi}$ also belongs to $C(\overline{\mathbb{D}})$.  We say that 
$\mathbb{D}$ is BC-regular.  In this paper we study BC-regularity of some 
pseudoconvex domains in $\mathbb{C}^n$ and show that the boundary 
geometry plays an important role. We also establish a relationship between 
the essential norm of an operator in a natural Toeplitz subalgebra and its 
Berezin transform. 
\end{abstract}
\maketitle

The Berezin transform plays an important role in operator theory, especially in 
regard to compactness of certain classes of operators.  A prime example is 
a well-known Axler-Zheng theorem \cite{AxlerZheng98} that characterizes 
compactness of Toeplitz operators on the Bergman space of the unit disc.  
The Axler-Zheng theorem, together with its extension done by 
Suarez \cite{Suarez07},  says that an operator $T$ in the Toeplitz algebra 
is compact if and only if the Berezin transform of $T$, denoted by 
$\widetilde{T}(z)$ (to be defined in the next section) goes to 0 as $z$ approaches 
the boundary of the unit disc.  There were many extensions and versions 
of this theorem including the recent contribution 
\cite{CuckovicSahutogluZeytuncu18} for the case the domain 
is pseudoconvex in $\C^n$.

Another example of the importance of the Berezin transform is the result by 
B\'{e}koll\'{e}-Berger-Coburn-Zhu \cite{BekolleBergerCoburnZhu90} that 
characterizes compactness of Hankel operators $H_f$ and $H_{\overline f}$ 
in terms of the mean oscillation $(\widetilde{|f|^2} - |\widetilde{f}|^2)^{1/2}$ 
near the boundary of the unit ball in $\C^n$, where $\widetilde{f}$ and 
$\widetilde{|f|^2}$ are the  Berezin transforms of $T_{f}$ and $T_{|f|^2}$, 
respectively. 

In this paper we are interested in Toeplitz operators acting on pseudoconvex 
domains that are not compact, but we want to find a relationship between 
their essential norms and the sup norms of their Berezin transforms on the 
boundary of the domain.  This led us to the question of the regularity of 
the Berezin transform.  It is well known that on the unit disc $\mathbb{D}$, 
if the function $\phi \in C(\overline{\mathbb{D}})$, then 
$\widetilde{\phi} \in C(\overline{\mathbb{D}})$ and moreover 
$\widetilde{\phi} = \phi$ on $b\mathbb{D}$.  It was shown in \cite{ArazyEnglis01} 
that the second part of this result is true on more general domains. In particular, 
they showed that when $\D$ is a bounded pseudoconvex domain, then 
$\lim_{z\to p} \widetilde{\phi}(z) = \phi(p)$, when $p$ is strongly pseudoconvex 
point in $b\D$ (see also \cite[Lemma 15]{CuckovicSahutogluZeytuncu18}).  
However the important question now is this: Is 
$\widetilde{\phi} \in C(\Dc)$ whenever $\phi\in C(\Dc)$? 
The following two related results are worth mentioning. The first is the 
work by Engli\v{s} \cite{Englis07} where he proved that the Berezin transform 
can have a singularity inside the domain in case the domain is unbounded. 
The second result is due to Coburn \cite{Coburn05} regarding the Lipschitz 
regularity of the Berezin transform of any bounded linear operator 
on the Bergman space of a bounded domain in $\C^n$ the Bergman metric.

In this paper,  we answer the question of regularity of the Berezin transform 
on several classes of bounded domains in $\C^n$ and on a subalgebra of the 
Toeplitz algebra. We show that the answer is negative for bounded convex 
domains  with discs in the boundary as well as dense strongly pseudoconvex 
points (see Theorem \ref{ThmDiscont}). However, the answer is positive for
products of strongly pseudoconvex domains and for bounded convex 
domains with no discs in the boundary (see Theorem \ref{ThmBregular} 
and  \ref{ThmProductDomains}). 
Finally, we establish the relationship between the essential norms 
of operators in certain Toeplitz subalgebra and the sup-norm of the 
Berezin transform on the set of strongly pseudoconvex points.  
The motivation for these results comes from the Axler-Zheng 
Theorem \cite{AxlerZheng98}.  On the unit disk, the compactness of a 
Toeplitz operator is equivalent to the vanishing of its Berezin transform 
on the unit circle.  This suggests that the measure of non-compactness of 
$T$ should be related to the $L^{\infty}$-norm of its Berezin transform 
on the boundary (see Proposition 1 in 
\cite{CuckovicSahutoglu13,CuckovicSahutoglu14IEOT} for a result in this direction).  
For more general domains, we know that the Berezin transform $\widetilde{T}$ 
need not be continuous on the boundary (see Theorem \ref{ThmDiscont} below), 
but we still have $ \widetilde{\phi}(z) \to \phi(p)$, when 
$z$ converges to a strongly pseudoconvex (or finite type) point $p$. 
This suggests that the essential norm of $T$ in the Toeplitz subalgebra should 
be related  to the norm of $\widetilde{T}$, not on the full boundary, 
but on the set of strongly pseudoconvex points. This is precisely what 
we prove in Theorems \ref{ThmEssentialNorm1} and \ref{ThmEssentialNorm2}.

This paper is organized as follows. In the next section we provide the 
basic set up and the main results. In Section \ref{SecProofs} we prove 
the Theorems. In Section \ref{SecExample} we construct a smooth 
bounded convex domain $\D$ in $\C^2$ such that 
$\|\widetilde{T}_{\phi}\|_{L^{\infty}(\D)}<\|T_{\phi}\|_e$ 
for some $\phi\in C^{\infty}(\Dc)$.  

\section{Preliminaries and Results} 
Let $\D$  be a bounded domain in $\C^n$. The space of square 
integrable holomorphic functions on $\D$, denoted by  $A^2(\D)$, 
is called the Bergman space of $\D$. Since $A^2(\D)$ is a 
closed subspace of $L^2(\D)$ there exists a bounded orthogonal 
projection $P:L^2(\D)\to A^2(\D)$, called the Bergman 
projection.  We denote the set of bounded linear operators on 
$A^2(\D)$ by  $\mathscr{B}(A^2(\D))$. The Toeplitz operator 
$T_{\phi}\in \mathscr{B}(A^2(\D))$ with symbol $\phi\in L^{\infty}(\D)$ 
is defined as 
\[T_{\phi}f=P(\phi f)\] 
 for $f\in A^2(\D)$.  We will work  on the norm closed subalgebra 
of $\mathscr{B}(A^2(\D))$ generated by $\{T_{\phi}:\phi\in C(\Dc)\}$ 
and we will denote it as $\mathscr{T}(\Dc)$. 

Next we define the Berezin transform. Let $K(\xi,z)$ denote 
the Bergman kernel of $\D$. We define the normalized kernel as 
\[k_z(\xi)=  \frac{K(\xi,z)}{\sqrt{K(z,z)}}\]  
for $\xi,z\in \D$. The Berezin transform of a bounded linear operator 
$T$ is defined as 
\[\widetilde{T}(z)=\langle Tk_z,k_z\rangle\]
for any $z\in \D$. For $\phi\in L^{\infty}(\D)$ we define 
$\widetilde{\phi}=\widetilde{T}_{\phi}$.  

Let $\mathbb{D}=\{z\in \C:|z|<1\}$ be the unit disc. It is well known that if 
$\phi \in C(\overline{\mathbb{D}})$ then its Berezin transform 
$\widetilde{\phi}$ also belongs to $C(\overline{\mathbb{D}})$.
In this paper we want to study conditions that guarantee continuity of 
the Berezin transform of operators on domains in $\C^n$. To be more specific, 
we want to find necessary and/or sufficient conditions on the domain 
$\D\subset \C^n$ such that $\widetilde{T}\in C(\Dc)$ whenever 
$T\in \mathscr{T}(\Dc)$.

\begin{definition}
Let $\D$ be a bounded domain in $\C^n$. We say that $\D$ is 
\textit{BC-regular} if $\widetilde{T}$ has a continuous extension 
onto $\Dc$ (that is, $\widetilde{T}\in C(\Dc)$) whenever 
$T\in \mathscr{T}(\Dc)$.	
\end{definition}

In this context we should mention the work of Arazy and Engli\v{s} about a weaker 
version of BC-regularity.  More specifically, in \cite[Theorem 2.3]{ArazyEnglis01} 
they proved that if $\D$ is either a bounded domain in $\C$ with
$C^1$-smooth boundary or a strongly pseudoconvex domain in $\C^n$ with
$C^3$-smooth boundary, then $\widetilde{T}_{\phi}\in C(\Dc)$ whenever 
$\phi\in C(\Dc)$. 

BC-regularity makes sense on a large class of domains in $\C^n$ called 
pseudoconvex domains. These domains include convex domains and 
are natural homes for holomorphic functions in the sense that for 
every boundary point $p$ of a pseudoconvex domain $\D$ there 
exists a holomorphic function $f_p$ on $\D$ such that $f_p$ has no 
local holomorphic extension beyond $p$. For $C^2$-smooth domains 
pseudoconvexity is defined as the Levi form of the domain being positive 
semi-definite on the boundary. Similarly, strongly pseudoconvex domains 
are domains whose Levi form is positive definite on the boundary. 
There is a large literature about these domains and we refer the reader to 
\cite{ChenShawBook,KrantzBook,RangeBook} for information  
about pseudoconvexity and general questions in several complex variables. 
Due to an example of Kohn and Nirenberg \cite{KohnNirenberg73} we know 
that the class of pseudoconvex domains is much larger than the locally 
convexifiable  domains.  In the following theorem we show that if the domain 
is a product of strongly pseudoconvex domains such as the polydisc, then the 
domain is BC-regular. We note that strongly pseudoconvex domains are not 
necessarily convex but locally convexifiable (see \cite[Lemma 3.2.2]{KrantzBook}) 
and are well understood in terms of the $\dbar$-Neumann problem and the 
Bergman kernel (see \cite{ChenShawBook,StraubeBook}). 

The following are the main theorems in our paper. These results show 
that the geometry of the domain plays an important role in understanding 
the BC-regularity of the domain. 

Theorem \ref{ThmBregular} gives a positive answer to the question above 
in case the domain is finite type in the sense of D'Angelo or convex 
and its boundary does not contain any analytic structure . We note that 
such domains satisfy property (P) of Catlin or equivalently B-regularity 
of Sibony (see, \cite{Catlin84, Sibony87, FuStraube98,StraubeBook}). 

We say a set $X\subset \C$ contains a non-trivial analytic disc if there exists a 
non-constant holomorphic mapping  $f:\mathbb{D}\to X$.

Finite type domains have been studied in relation to the $\dbar$-Neumann 
problems, behavior of the Bergman kernel, and the CR-geometry, etc.  
There are different notions of finite type but for this paper the most 
relevant is the finite type in the sense of D'Angelo. There is a rich 
literature about this topic. We refer the reader to 
\cite{D'Angelo82,D'AngeloBook,D'AngeloKohn99} and the 
references there in for more information about finite type domains. 

\begin{theorem}\label{ThmBregular}
Let $\D$ be  a bounded convex domain in $\C^n$ with no non-trivial 
analytic disc in the boundary or a $C^{\infty}$-smooth bounded 
pseudoconvex domain in $\C^n$ that is finite type in the sense of 
D'Angelo. Then $\D$ is BC-regular. 
\end{theorem}

The second result shows that cross-products of BC-regular domains are 
BC-regular. 

\begin{theorem} \label{ThmProductDomains}
Let $\D$ be a finite product of bounded BC-regular domains.  
Then $\D$ is BC-regular. 
\end{theorem}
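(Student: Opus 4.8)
The plan is to exploit the tensor-product structure that a product domain induces on the Bergman space, and to reduce the regularity of the Berezin transform of an arbitrary element of $\mathscr{T}(\Dc)$ to the single-factor case. Write $\D=\D_1\times\cdots\times\D_N$ with each $\D_j$ a $C^2$-smooth bounded strongly pseudoconvex domain. First I would record that $\{T\in\mathscr{B}(A^2(\D)):\widetilde{T}\in C(\Dc)\}$ is a norm-closed linear subspace: since $\|k_z\|=1$ we have $|\widetilde{T}(z)|\leq\|T\|$, so operator-norm convergence $T_m\to T$ forces uniform convergence $\widetilde{T_m}\to\widetilde{T}$ on $\D$, and a uniform limit of functions continuous on $\Dc$ is continuous on $\Dc$; continuity of $\widetilde{T}$ on the open set $\D$ is automatic because $z\mapsto k_z$ is continuous into $A^2(\D)$. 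Hence it suffices to verify BC-regularity on a family whose closed linear span contains $\mathscr{T}(\Dc)$.

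Next I would invoke the factorizations $L^2(\D)=\bigotimes_j L^2(\D_j)$, $A^2(\D)=\bigotimes_j A^2(\D_j)$, and $P=\bigotimes_j P_j$, which give $K(\xi,z)=\prod_j K_j(\xi_j,z_j)$ and therefore $k_z=\bigotimes_j k^{j}_{z_j}$, where $K_j$ and $k^{j}$ are the Bergman kernel and normalized kernel of $\D_j$. For a symbol of product type these identities yield $T_{\psi_1\cdots\psi_N}=T_{\psi_1}\otimes\cdots\otimes T_{\psi_N}$. By Stone--Weierstrass every $\phi\in C(\Dc)$ is a uniform limit of finite sums of products $\psi_1(z_1)\cdots\psi_N(z_N)$ with $\psi_j\in C(\overline{\D_j})$, and since $\phi\mapsto T_\phi$ is contractive, each generator $T_\phi$ lies in the closed linear span of tensor products $\bigotimes_j A_j$ with $A_j\in\mathscr{T}(\overline{\D_j})$; as this span is an algebra, it contains all of $\mathscr{T}(\Dc)$. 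Combined with the previous paragraph, it remains only to show $\widetilde{\bigotimes_j A_j}\in C(\Dc)$ for $A_j\in\mathscr{T}(\overline{\D_j})$. The crucial point is that the factorization of $k_z$ gives $\widetilde{\bigotimes_j A_j}(z)=\prod_j\widetilde{A_j}(z_j)$, so the problem reduces to proving that each factor $\D_j$ is itself BC-regular.

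For the single-factor step (all Berezin transforms below are taken on $\D_j$) I would first establish the localization estimate that for $\phi\in C(\overline{\D_j})$ and $p\in b\D_j$,
\[\|T_\phi k^{j}_w-\phi(p)k^{j}_w\|^2=\big\|P_j\big((\phi-\phi(p))k^{j}_w\big)\big\|^2\leq\big\|(\phi-\phi(p))k^{j}_w\big\|^2=\widetilde{|\phi-\phi(p)|^2}(w)\longrightarrow 0\quad\text{as }w\to p,\]
where the limit uses the boundary-value property $\widetilde{\psi}(w)\to\psi(p)$ at strongly pseudoconvex points (the single-Toeplitz case of Arazy--Engli\v{s}, reflecting the concentration of $|k^{j}_w|^2\,dV$ near $p$) applied to $\psi=|\phi-\phi(p)|^2$. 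Telescoping then gives $\|T_{\phi_1}\cdots T_{\phi_m}k^{j}_w-\phi_1(p)\cdots\phi_m(p)k^{j}_w\|\to 0$, so $\widetilde{T_{\phi_1}\cdots T_{\phi_m}}(w)\to\phi_1(p)\cdots\phi_m(p)$; together with interior continuity and the closed-subspace argument applied on $\D_j$, this shows $\widetilde{A_j}\in C(\overline{\D_j})$ for every $A_j\in\mathscr{T}(\overline{\D_j})$. Substituting into $\prod_j\widetilde{A_j}(z_j)$ yields a function continuous on $\Dc$, completing the argument.

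The main obstacle I anticipate is the single-factor boundary analysis under only $C^2$ smoothness: the cited continuity $\widetilde{\psi}(w)\to\psi(p)$, and the underlying concentration of the normalized kernel, are most cleanly available for higher boundary regularity (Arazy--Engli\v{s} assume $C^3$), so I would need to confirm that the strongly pseudoconvex kernel estimates, and hence the localization estimate, persist at $C^2$ boundaries. The tensor bookkeeping---identifying $\mathscr{T}(\Dc)$ with the closed linear span of tensor products and making the telescoping uniform as $w\to p$---is routine by comparison.
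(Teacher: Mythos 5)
Your proposal is correct, but it takes a genuinely different route from the paper's proof. The paper argues by induction on the number of factors: for $q=(q',q_m)\in b\D$ with $q_m\in b\D_m$ it splits each symbol as $\phi_j=\phi_{j,1}+\phi_{j,2}$, where $\phi_{j,1}(z',z_m)=\phi_j(z',q_m)$, and uses the uniform decay of $k^{\D_m}_{z_m}$ away from $q_m$ (from the proof of Lemma 1 in \cite{CuckovicSahutoglu13}) to show that every term involving some $\phi_{j,2}$ vanishes as $z\to q$; thus $\widetilde{T}(z)$ converges to the Berezin transform, on the smaller product $\D'$, of the product of Toeplitz operators with the frozen symbols $\phi_{j,1}$, and the single-factor base case is handled by the proof of Theorem \ref{ThmEssentialNorm1}, i.e.\ by writing a product of Toeplitz operators as $T_\phi+K$ with $K$ compact, which requires compactness of the $\dbar$-Neumann operator and of Hankel operators. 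You instead exploit the tensor factorization $A^2(\D)=\bigotimes_j A^2(\D_j)$, $k_z=\bigotimes_j k^j_{z_j}$ globally: Stone--Weierstrass together with contractivity of $\phi\mapsto T_\phi$ places the generators---and hence, since the span of elementary tensors with entries in the factor Toeplitz algebras is an algebra, all of $\mathscr{T}(\Dc)$---inside the closed linear span of elementary tensors, whose Berezin transforms factor as $\prod_j\widetilde{A_j}(z_j)$; this reduces the theorem to BC-regularity of a single strongly pseudoconvex factor, which you obtain from the localization estimate $\|(T_\phi-\phi(p))k_w\|^2\leq\widetilde{|\phi-\phi(p)|^2}(w)\to 0$ and telescoping. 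Each approach buys something: yours is more modular (it in fact shows that any finite product of BC-regular bounded domains is BC-regular, strong pseudoconvexity entering only through the single-factor boundary behavior), and your single-factor argument is more elementary than the paper's, needing neither the $T_\phi+K$ decomposition nor any $\dbar$-Neumann or Hankel compactness; the paper's induction, on the other hand, works directly with arbitrary non-product symbols, avoids the tensor/Stone--Weierstrass bookkeeping, and exhibits the boundary values of $\widetilde{T}$ explicitly. Concerning the one obstacle you flag---whether $\widetilde{\psi}(w)\to\psi(p)$ at strongly pseudoconvex points survives mere $C^2$ regularity---this is exactly the input the paper itself uses: it follows from \cite[Lemma 15]{CuckovicSahutogluZeytuncu18} combined with the existence of holomorphic peak functions at strongly pseudoconvex boundary points of $C^2$ domains \cite[Theorem 1.13 in Ch VI]{RangeBook}, so no appeal to the $C^3$ hypothesis of Arazy--Engli\v{s} is needed, and your argument closes completely.
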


Then we have an immediate corollary to the two theorems above. 
\begin{corollary}\label{Cor1}
Let $\D=\D_1\times \cdots \times \D_m$ such that each $\D_j$ is either 
a $C^2$-smooth bounded strongly pseudoconvex  or a $C^{\infty}$-smooth 
bounded pseudoconvex finite type domain in the sense of D'Angelo. 
Then $\D$ is BC-regular. 
\end{corollary} 

The next theorem gives a negative answer to the BC-regularity 
question on bounded convex domains that contain discs in the 
boundary yet have dense strongly pseudoconvex points. 

\begin{theorem}\label{ThmDiscont}
Let $\D$ be a $C^{\infty}$-smooth bounded convex domain in $\C^n$ for 
$n\geq 2$. Assume that the boundary of $\D$ contains a non-trivial analytic 
disc and the set of strongly pseudoconvex points is dense in the boundary 
of $\D$. Then the Berezin transform does not map $C(\Dc)$ to itself (and 
hence $\D$ is not BC-regular).
\end{theorem}

Next we will introduce the $\dbar$-Neumann problem and its relationship  
to Hankel operators. 

Let $\Box=\dbar^*\dbar+\dbar\dbar^*:L^2_{(0,1)}(\D)\to L^2_{(0,1)}(\D)$ 
be the complex Laplacian, where $\dbar^*$ is the Hilbert space adjoint 
of $\dbar:L^2_{(0,1)}(\D)\to L^2_{(0,2)}(\D)$.   When $\D$ is a bounded 
pseudoconvex domain in $\C^n$, H\"{o}rmander \cite{Hormander65} 
showed that $\Box$ has a bounded inverse $N$ called the 
$\dbar$-Neumann operator. The $\dbar$-Neumann operator is an 
important tool in several complex variables that is closely connected 
to the boundary geometry of the domains. Kohn in \cite{Kohn63} 
showed that the Bergman projection is connected to the $\dbar$-Neumann 
operator by the formula $P=I-\dbar^*N\dbar$. We refer the reader 
to the books \cite{ChenShawBook,StraubeBook} for more information 
about the $\dbar$-Neumann operator. 

Kohn's formula implies that the Hankel operator $H_{\psi}$ 
with symbols $\psi\in C^1(\Dc)$ satisfies the formula 
$H_{\psi}f=\dbar^*N(f\dbar\psi)$ for any $f\in A^2(\D)$. Furthermore, 
one can show that compactness of the $\dbar$-Neumann operator implies 
that $H_{\psi}$ is compact for all $\psi\in C(\Dc)$ 
(see, \cite[Proposition 4.1]{StraubeBook}). We have used the formula 
$H_{\psi}f=\dbar^*N(f\dbar\psi)$ to study compactness of Hankel 
operators on some domains in $\C^n$ on which $N$ is not necessarily 
compact (see, for instance,  
\cite{CuckovicSahutoglu09,CuckovicSahutoglu17,CelikSahutoglu12,Sahutoglu12}). 

In this paper we are also interested in the essential norm estimates of 
operators in   $\mathscr{T}(\Dc)$. Recall that if $T$ is a bounded linear 
map on $A^2(\D)$ then its essential norm is defined  by 
\[\|T\|_e=\inf\{ \|T-K\|: K \text{ is compact on } A^2(\D)\}.\]
In this context we mention the work in \cite{AxlerConwayMcDonald82} where 
the authors were the first to study compactness and the essential norms of 
Toeplitz operators, with symbols continuous up to the boundary, acting 
on the Bergman space of a bounded domain in $\C$. 

The following theorem establishes the relation between the essential norm of 
$T$ and the sup-norm of $\widetilde{T}$ on the set of strongly pseudoconvex 
points under the assumption that the $\dbar$-Neumann operator is compact.

\begin{theorem}\label{ThmEssentialNorm1} 
Let $\D$ be a $C^{\infty}$-smooth bounded pseudoconvex domain 
in $\C^n$ and $\Gamma\subset b\D$ denote the set of finite type 
points in the sense of D'Angelo. Assume that the $\dbar$-Neumann 
operator is compact on $L^2_{(0,1)}(\D)$ and $T\in \mathscr{T}(\Dc)$. 
Then $\widetilde{T}$ has a continuous extension onto 
$\D\cup\Gamma$ and $\|T\|_e=\|\widetilde{T}\|_{L^{\infty}(\Gamma)}$. 
\end{theorem}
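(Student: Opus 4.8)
The plan is to pass to the Calkin algebra, represent every element of $\mathscr{T}(\Dc)$ by a single Toeplitz operator modulo compacts, and then read off both the boundary values of the Berezin transform and the essential norm from the symbol. First I would record the semi-commutator identity $T_\phi T_\psi - T_{\phi\psi} = -H_{\bar\phi}^{*}H_\psi$ for $\phi,\psi\in C(\Dc)$, which follows from $T_\phi^{*}=T_{\bar\phi}$ together with the definitions of $T$ and $H$. Since the $\dbar$-Neumann operator is compact, every Hankel operator $H_\psi$ with $\psi\in C(\Dc)$ is compact, so the right-hand side is compact. Hence $\rho(\phi)=[T_\phi]$ defines a $*$-homomorphism of $C^{*}$-algebras from $C(\Dc)$ into the Calkin algebra $\mathscr{B}(A^2(\D))/\mathscr{K}(A^2(\D))$. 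Its image is therefore a closed $*$-subalgebra; as it contains all the generators $[T_\phi]$, it coincides with the image of $\mathscr{T}(\Dc)$. Consequently, for every $T\in\mathscr{T}(\Dc)$ there is $\phi\in C(\Dc)$ with $T-T_\phi$ compact, and all three quantities in the statement reduce to assertions about $T_\phi$.

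Next I would analyze the Berezin transform. Writing $T=T_\phi+K$ with $K$ compact, we have $\widetilde T=\widetilde{T_\phi}+\widetilde K$. As $z\to b\D$ the normalized kernels $k_z$ tend to $0$ weakly, so $\|Kk_z\|\to0$ and hence $\widetilde K(z)=\langle Kk_z,k_z\rangle\to0$; meanwhile $\widetilde{T_\phi}(z)=\int_\D \phi\,|k_z|^2\to\phi(p)$ as $z\to p\in\Gamma$ by the Arazy--Engli\v{s} result quoted above. Thus $\widetilde T$ extends continuously to $\D\cup\Gamma$ with $\widetilde T|_\Gamma=\phi|_\Gamma$, so $\|\widetilde T\|_{L^{\infty}(\Gamma)}=\|\phi\|_{L^{\infty}(\Gamma)}$. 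The same decay $\widetilde K(z)\to0$ gives the easy inequality: for any compact $K$ we have $|\widetilde T(z)|\le\|T-K\|+|\widetilde K(z)|$, whence $|\widetilde T(p)|\le\|T\|_e$ for $p\in\Gamma$ and therefore $\|\widetilde T\|_{L^{\infty}(\Gamma)}\le\|T\|_e$.

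For the reverse inequality I would bound $\|T_\phi\|_e$ by the sup-norm of the symbol on $b\D$. A cutoff argument shows $\|T_\psi\|_e\le\|\psi\|_{L^{\infty}(b\D)}$ for every $\psi\in C(\Dc)$: a symbol with compact support in $\D$ gives a compact operator, and a symbol vanishing on $b\D$ can be made uniformly small in a neighborhood of $b\D$ while being cut off to a compactly supported remainder. Combining this with the $C^{*}$-identity $\|T_\phi\|_e^{2}=\|T_{\bar\phi}T_\phi\|_e=\|T_{|\phi|^{2}}\|_e$ (again using compactness of $H_\phi$) and iterating yields $\|T_\phi\|_e^{2^{k}}=\|T_{|\phi|^{2^{k}}}\|_e\le\|\phi\|_{L^{\infty}(b\D)}^{2^{k}}$, hence $\|T\|_e=\|T_\phi\|_e\le\|\phi\|_{L^{\infty}(b\D)}$. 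It remains to replace $b\D$ by $\Gamma$, and here I would invoke that compactness of $N$ forces $\Gamma$ to be dense in $b\D$: an analytic disc in the boundary is incompatible with compactness of $N$, while a relatively open set of weakly pseudoconvex points would, via the involutivity of the null distribution of the Levi form, produce such a disc. Hence no such open set exists, $\overline{\Gamma}=b\D$, and continuity of $\phi$ gives $\|\phi\|_{L^{\infty}(b\D)}=\|\phi\|_{L^{\infty}(\Gamma)}$. Chaining the inequalities yields $\|\widetilde T\|_{L^{\infty}(\Gamma)}\le\|T\|_e\le\|\phi\|_{L^{\infty}(b\D)}=\|\widetilde T\|_{L^{\infty}(\Gamma)}$, so all are equal.

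I expect the density step---ruling out relatively open sets of weakly pseudoconvex points under compactness of $N$---to be the main obstacle, since it is the only place where the boundary geometry, rather than soft $C^{*}$-algebra and Berezin-transform arguments, is genuinely used; the weak convergence $k_z\rightharpoonup0$ and the limit $\widetilde{T_\phi}\to\phi$ on $\Gamma$ are the supporting analytic inputs that make the reduction to the symbol possible.
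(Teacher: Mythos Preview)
Your argument is correct and follows essentially the same route as the paper: reduce to $T=T_\phi+K$ with $K$ compact (via compactness of Hankel operators when $N$ is compact), use $k_z\rightharpoonup 0$ to kill $\widetilde K$ on $b\D$, use $\widetilde{T_\phi}\to\phi$ at strongly pseudoconvex points to get the boundary values, and invoke density of $\Gamma$ in $b\D$ to identify $\|\phi\|_{L^\infty(b\D)}$ with $\|\phi\|_{L^\infty(\Gamma)}$. The one genuine difference is how you obtain the decomposition for a \emph{general} $T\in\mathscr{T}(\Dc)$: the paper works with approximating sequences $T_{\phi_j}+K_j$ and a Cauchy-sequence argument in $C(\Dc\cap\overline{B(p,r)})$, whereas you pass to the Calkin algebra and observe that $\phi\mapsto[T_\phi]$ is a $*$-homomorphism of $C^*$-algebras, hence has closed range, so every $[T]$ is exactly $[T_\phi]$ for some $\phi$. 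This is a clean shortcut that eliminates the $\varepsilon$-chasing in the paper. Your cutoff argument for $\|T_\psi\|_e\le\|\psi\|_{L^\infty(b\D)}$ is also more self-contained than the paper's citation of an earlier result; note, however, that the subsequent $C^*$-iteration $\|T_\phi\|_e^{2^k}=\|T_{|\phi|^{2^k}}\|_e$ is redundant, since the cutoff already gives the bound directly. For the density of $\Gamma$, the paper simply cites \cite{SahutogluStraube06}; your sketch via involutivity of the Levi null distribution is the right idea but, as you note, is the one step that requires nontrivial geometric input.
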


In the absence of compactness of the $\dbar$-Neumann operator, we use  
the new essential norm, denoted by $\|.\|_{e*}$, that measures the distance 
of the operator to a new class of operators containing Hankel operators 
with symbols continuous on the closure. This essential norm was introduced in 
\cite{CuckovicSahutogluZeytuncu18}. Next we make the relevant definitions. 

\begin{definition}
Let $\D$ be a $C^2$-smooth bounded pseudoconvex domain in $\C^n$. We say 
\begin{itemize}
\item[i.]  a sequence $\{f_j\}\subset A^2(\D)$ converges to $f\in A^2(\D)$  
	\textit{weakly about strongly pseudoconvex points} if  $f_j\to f$ weakly  
	and $\|f_j-f\|_{L^2(U\cap \D)}\to 0$ for some neighborhood $U$ of the 
	set of the weakly  pseudoconvex points in $b\D$, 
\item[ii.] a bounded linear operator $T$ on  $A^2(\D)$  is 
	\textit{compact about strongly pseudoconvex points} if $Tf_j\to Tf$ in 
	$A^2(\D)$ whenever $f_j\to f$ weakly about strongly pseudoconvex points.
\end{itemize}
\end{definition}

Let $K^*$ denote the operators on $A^2(\D)$ that are compact about 
strongly pseudoconvex points and $T:A^2(\D)\to A^2(\D)$. Then we 
denote 
\[\|T\|_{e*}=\inf\{\|T-S\|: S\in K^*\}.\]

\begin{theorem}\label{ThmEssentialNorm2}
Let $\D$ be a $C^2$-smooth bounded pseudoconvex domain in $\C^n$ 
and $\Gamma$ denote the set of strongly pseudoconvex points in $b\D$. 
Assume that $T\in \mathscr{T}(\Dc)$. Then $\widetilde{T}$ has a continuous 
extension onto $\D\cup \Gamma$ and 
$\|T\|_{e*}=\|\widetilde{T}\|_{L^{\infty}(\Gamma)}$.  
\end{theorem}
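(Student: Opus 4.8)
The plan is to prove Theorem \ref{ThmEssentialNorm2} by establishing two inequalities, $\|T\|_{e*}\geq \|\widetilde{T}\|_{L^\infty(\Gamma)}$ and $\|T\|_{e*}\leq \|\widetilde{T}\|_{L^\infty(\Gamma)}$, after first showing that $\widetilde{T}$ extends continuously to $\D\cup\Gamma$. For the continuity of the extension, I would argue that it suffices to treat $T$ of the form $T_{\phi_1}\cdots T_{\phi_m}$ with each $\phi_j\in C(\Dc)$, since such finite products span a dense subalgebra of $\mathscr{T}(\Dc)$ and the Berezin transform is norm-continuous on $\mathscr{B}(A^2(\D))$ (indeed $|\widetilde{T}(z)-\widetilde{S}(z)|\leq \|T-S\|$ for all $z$, so a uniform limit of continuously-extending Berezin transforms again extends continuously). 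For a single such product, I would insert the commutator/Hankel decomposition: writing $T_{\phi}T_{\psi}=T_{\phi\psi}-H_{\bar\phi}^*H_\psi$ and iterating, one reduces $\widetilde{T}$ near a strongly pseudoconvex point $p\in\Gamma$ to the Berezin transform of a single Toeplitz operator $T_\Phi$ plus Hankel-type error terms whose Berezin transforms vanish at $p$. The key localization fact I would lean on is that at a strongly pseudoconvex boundary point, $\widetilde{T}_\Phi(z)\to \Phi(p)$ as $z\to p$, which is exactly the Arazy--Engli\v{s} result cited in the excerpt (and \cite[Lemma 15]{CuckovicSahutogluZeytuncu18}).

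For the lower bound $\|T\|_{e*}\geq \|\widetilde{T}\|_{L^\infty(\Gamma)}$, the plan is to exploit the definition of $\|\cdot\|_{e*}$ directly. Fix any $S\in K^*$; I want to show $\|T-S\|\geq \|\widetilde{T}\|_{L^\infty(\Gamma)}$. The essential point is that for $p\in\Gamma$ the normalized kernels $k_{z}$ converge weakly to $0$ as $z\to p$, and moreover this convergence is \emph{weak about strongly pseudoconvex points}: since $p$ is strongly pseudoconvex, one can choose a neighborhood $U$ of the weakly pseudoconvex set avoiding $p$, and then $\|k_z\|_{L^2(U\cap\D)}\to 0$ because the Bergman kernel mass of $k_z$ concentrates near $z\to p$. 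Consequently $\widetilde{S}(z)=\langle Sk_z,k_z\rangle\to 0$, since $Sk_z\to 0$ in norm while $\|k_z\|=1$. Therefore $\widetilde{T}(p)=\lim_{z\to p}\widetilde{T}(z)=\lim_{z\to p}\langle (T-S)k_z,k_z\rangle$, and the Cauchy--Schwarz bound $|\langle (T-S)k_z,k_z\rangle|\leq \|T-S\|$ gives $|\widetilde{T}(p)|\leq \|T-S\|$. Taking the supremum over $p\in\Gamma$ and the infimum over $S\in K^*$ yields the lower bound.

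For the upper bound $\|T\|_{e*}\leq \|\widetilde{T}\|_{L^\infty(\Gamma)}$, the strategy is to produce an explicit operator $S\in K^*$ with $\|T-S\|\leq \|\widetilde{T}\|_{L^\infty(\Gamma)}$. Here I expect the construction from \cite{CuckovicSahutogluZeytuncu18} to be decisive: one decomposes $T$ via the Hankel formula $H_\psi f=\dbar^* N(f\,\dbar\psi)$ and the algebra structure of $\mathscr{T}(\Dc)$ to isolate a principal Toeplitz part $T_\Phi$ whose symbol $\Phi$ agrees on $\Gamma$ with the boundary values of $\widetilde{T}$, while all Hankel-commutator remainders land in $K^*$ (this is where one uses that Hankel operators with $C(\Dc)$ symbols are compact about strongly pseudoconvex points, which follows from localizing the $\dbar$-Neumann operator to the strongly pseudoconvex part of the boundary). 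The norm of the leading Toeplitz operator on $A^2(\D)$, modulo $K^*$, is then controlled by the sup of $|\Phi|=|\widetilde{T}|$ over $\Gamma$.

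The main obstacle I anticipate is the upper bound, specifically verifying that the remainder operators genuinely belong to $K^*$ rather than merely being compact: one must show that Hankel operators $H_\psi$ with $\psi\in C(\Dc)$ are compact \emph{about strongly pseudoconvex points} even when the $\dbar$-Neumann operator fails to be globally compact. This requires a localization argument at the strongly pseudoconvex boundary, using that $N$ is compact (or at least smoothing) near strongly pseudoconvex points together with the hypothesis that the approximating sequence converges in $L^2$-norm on a neighborhood of the weakly pseudoconvex set; combining these to upgrade weak convergence about strongly pseudoconvex points into norm convergence of $H_\psi f_j$ is the delicate step, and I would expect to invoke subelliptic estimates for $\Box$ near $\Gamma$ to carry it out.
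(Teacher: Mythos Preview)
Your overall architecture matches the paper's proof: both establish continuity of $\widetilde{T}$ on $\D\cup\Gamma$ via the decomposition $T=T_\phi+K$ with $K\in K^*$ (citing \cite{CuckovicSahutogluZeytuncu18}), both prove the lower bound $\|\widetilde{T}\|_{L^\infty(\Gamma)}\leq\|T\|_{e*}$ exactly as you outline (using that $k_z\to 0$ weakly about strongly pseudoconvex points as $z\to p\in\Gamma$, so $\widetilde{S}(z)\to 0$ for any $S\in K^*$), and both then pass from finite sums of products to general $T\in\mathscr{T}(\Dc)$ by an $\varepsilon/2$ approximation.

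There is, however, a genuine gap in your upper bound. You write that ``the norm of the leading Toeplitz operator on $A^2(\D)$, modulo $K^*$, is then controlled by the sup of $|\Phi|=|\widetilde{T}|$ over $\Gamma$,'' but you give no mechanism for this, and it is precisely the nontrivial step. The paper's device is a cutoff: given $\alpha>\|\phi\|_{L^\infty(\Gamma)}$, choose an open neighborhood $U$ of $\overline{\Gamma}$ on which $|\phi|<\alpha$ and a function $\chi\in C_0^\infty(U)$ with $0\leq\chi\leq 1$ and $\chi\equiv 1$ near $\overline{\Gamma}$. Then $(1-\chi)\phi$ vanishes on a neighborhood of $\Gamma$, so $T_{(1-\chi)\phi}\in K^*$ by \cite[Lemma~12]{CuckovicSahutogluZeytuncu18}, and hence
\[
\|T\|_{e*}\leq \|T_\phi-T_{(1-\chi)\phi}\|=\|T_{\chi\phi}\|\leq \|\chi\phi\|_{L^\infty(\D)}\leq \alpha.
\]
Without this cutoff argument (or an equivalent), the inequality $\|T_\phi\|_{e*}\leq\|\phi\|_{L^\infty(\Gamma)}$ is simply asserted.

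Relatedly, you misidentify the main obstacle. Showing that Hankel operators $H_\psi$ with $\psi\in C(\Dc)$ lie in $K^*$ is indeed needed, but it is already established in \cite{CuckovicSahutogluZeytuncu18} (proof of Theorem~4 there) and the paper simply cites it; the subelliptic-estimate localization you sketch is not required here. The step that actually needs work in \emph{this} proof is the cutoff bound above, together with the companion fact (also \cite[Lemma~12]{CuckovicSahutogluZeytuncu18}) that Toeplitz operators whose symbols vanish near $\Gamma$ belong to $K^*$.
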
 	

\section{Proofs of Theorems } \label{SecProofs} 

We start this section by explaining  the role of compactness of 
the $\dbar$-Neumann operator in the following lemma,   
the proof of which is contained in the proof of Theorem 1 
in \cite{CuckovicSahutoglu13}. The basic idea is this. If $\phi_1,\phi_2$ are 
continuous on the closure of the domain on which the $\dbar$-Neumann 
operator is compact, then $T_{\phi_1}T_{\phi_2}=T_{\phi_1\phi_2}+K$,  
where $K$ is a compact operator.  

\begin{lemma}\label{LemFinite} 
Let $\D$ be a bounded pseudoconvex domain in $\C^n$ on which the 
$\dbar$-Neumann operator is compact. Assume that $T$ is a finite sum 
of finite products of Toeplitz operators with symbols continuous on $\Dc$. 
Then there exists $\phi\in C(\Dc)$ and a compact operator $K$ on $A^2(\D)$ 
such that  $T=T_{\phi}+K$.   
\end{lemma}

At this point we would like to clarify the use of weak convergence in this paper. 
Let $\D$ be a domain in $\C^n$ and $\mu$ be a measure supported on $\Dc$.  
We say $|k_z|^2\to \mu$ weakly as $z\to p$ if $\int \phi|k_z|^2\to \int \phi d\mu$ 
as $z\to p$ for all $\phi\in C(\Dc)$. However, we say $k_z\to 0$ weakly in $A^2(\D)$ 
as $z\to p$ if $\langle k_z,f\rangle\to 0$ as $z\to p$ for all $f\in A^2(\D)$.

In several proofs below, we will use the fact that $C(K)$ is a separable Banach  
space, and hence the closed unit ball of its  dual is weak-star metrizable 
(see, for instance, \cite[Theorem 5.1 in Ch V]{ConwayFuncAnaBook}). 
This fact allows us to use sequences  with Alaoglu theorem. 

Next we prove a sufficient condition for BC-regularity below.

\begin{lemma}\label{LemWeakConv}
Let $\D$ be a bounded pseudoconvex domain in $\C^n$ on which 
the $\dbar$-Neumann operator is compact. Assume that   
$k_z\to 0$ weakly in $A^2(\D)$ as $z\to b\D$ and 
$|k_z|^2\to \delta_p$ weakly as $z\to p$ for any $p\in b\D$. 
Then $\D$ is BC-regular.   	
\end{lemma}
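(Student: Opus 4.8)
The plan is to mirror the structure of the proof of Theorem \ref{ThmEssentialNorm1}, but to exploit the stronger hypotheses to obtain continuity of $\widetilde{T}$ at \emph{every} boundary point rather than only at strongly pseudoconvex ones. First I would reduce to the dense subalgebra of $\mathscr{T}(\Dc)$ consisting of finite sums of finite products of Toeplitz operators with symbols in $C(\Dc)$. Since the $\dbar$-Neumann operator is compact, Hankel operators with symbols continuous on $\Dc$ are compact (see \cite[Proposition 4.1]{StraubeBook}), and therefore every such finite sum of finite products can be written as $T=T_{\phi}+K$ with $\phi\in C(\Dc)$ and $K$ compact (as in the proof of Theorem 1 in \cite{CuckovicSahutoglu13}).

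For this base case I would compute the Berezin transform explicitly. Since $Pk_z=k_z$ and $P$ is self-adjoint,
\[\widetilde{T}_{\phi}(z)=\langle P(\phi k_z),k_z\rangle=\langle \phi k_z,k_z\rangle=\int_{\D}\phi|k_z|^2\,dV,\]
and because $\|k_z\|=1$ the measures $|k_z|^2\,dV$ are probability measures on $\D$. The hypothesis $|k_z|^2\to\delta_p$ weakly as $z\to p$ then yields $\widetilde{T}_{\phi}(z)\to\phi(p)$ for every $p\in b\D$, so $\widetilde{T}_{\phi}$ extends continuously to $\Dc$ with boundary values $\phi$. For the compact piece, $\widetilde{K}(z)=\langle Kk_z,k_z\rangle$, and the hypothesis $k_z\to0$ weakly as $z\to b\D$ forces $\|Kk_z\|\to0$ by compactness of $K$; hence $|\widetilde{K}(z)|\le\|Kk_z\|\to0$, so $\widetilde{K}$ extends continuously to $\Dc$ and vanishes on $b\D$. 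Adding the two pieces shows $\widetilde{T}\in C(\Dc)$ for every finite sum of finite products.

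To pass to a general $T\in\mathscr{T}(\Dc)$ I would approximate in operator norm by operators $T_j=T_{\phi_j}+K_j$ of the above type. From
\[|\widetilde{T}_j(z)-\widetilde{T}(z)|=|\langle(T_j-T)k_z,k_z\rangle|\le\|T_j-T\|\]
the continuous extensions $\widetilde{T}_j\in C(\Dc)$ form a uniformly Cauchy sequence on $\D$, hence, by continuity (the supremum over $\D$ equals the supremum over $\Dc$), a uniformly Cauchy sequence on $\Dc$. Its uniform limit is continuous on $\Dc$ and agrees with $\widetilde{T}$ on $\D$, so $\widetilde{T}$ extends continuously to $\Dc$ and $\D$ is BC-regular.

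I do not expect a serious obstacle here, as the argument runs parallel to that of Theorem \ref{ThmEssentialNorm1}. The one point to handle with care is that the weak convergence $|k_z|^2\to\delta_p$ must be tested against the continuous symbol $\phi$ at \emph{all} boundary points of $\D$; this is precisely what upgrades the localized, strongly-pseudoconvex-only continuity of Theorem \ref{ThmEssentialNorm1} to continuity on the entire closure. The main structural input remains the decomposition $T=T_{\phi}+K$, which rests on compactness of the $\dbar$-Neumann operator.
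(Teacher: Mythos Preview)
Your proposal is correct and follows essentially the same approach as the paper's proof: decompose each approximant as $T_{\phi_j}+K_j$ using compactness of $N$, use $|k_z|^2\to\delta_p$ to get $\widetilde{T}_{\phi_j}\to\phi_j$ on $b\D$ and $k_z\to 0$ weakly to get $\widetilde{K}_j\to 0$ on $b\D$, then pass to the limit via a uniform Cauchy argument. Your write-up is in fact somewhat more explicit than the paper's (you spell out the integral representation of $\widetilde{T}_{\phi}$ and the reason $\|Kk_z\|\to 0$), but the logical skeleton is identical.
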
 	 
\begin{proof}
Assume that $T\in \mathscr{T}(\Dc)$. Then there exists $\{T_j\}$, 
a sequence  of finite sum of finite products of Toeplitz operators with 
symbols continuous on $\Dc$, such that $\|T-T_j\|\to 0$. Since the 
$\dbar$-Neumann operator is compact, by Lemma \ref{LemFinite} we 
have $T_j=T_{\phi_j}+K_j$ for some $\phi_j\in C(\Dc)$ and compact 
operator $K_j$ for every $j$. 
The assumption $|k_z|^2\to \delta_p$ weakly as $z\to p$ for any 
$p\in b\D$ implies that $\widetilde{T}_{\phi_j}=\phi_j$ on $b\D$ for 
every $j$.  Furthermore, since $k_z\to 0$ weakly as $z\to b\D$ and $K_j$ 
is compact we have $\widetilde{K}_j=0$ on $b\D$. Then 
$\widetilde{T}_j\in C(\Dc)$ for every $j$, $\widetilde{T}_j\to \widetilde{T}$ 
in $L^{\infty}(\D)$ as $j\to \infty$, and $\{\widetilde{T}_j\}$ form a 
Cauchy sequence in $C(\Dc)$. Hence $\widetilde{T}$ has a continuous 
extension onto $\Dc$. That is, $\D$ is BC-regular. 
\end{proof}	

\begin{remark}\label{Rmk1}
We note that the condition  $k_z\to 0$ weakly in $A^2(\D)$ as $z\to b\D$ 
in Lemma \ref{LemWeakConv} is automatic if the boundary is $C^{\infty}$-smooth 
(see, \cite[Lemma 4.9]{CuckovicSahutoglu18}). However, it is not automatic if we 
do not assume any regularity of the boundary even if $N$ is compact. 
For instance, if $\D=\{(z_1,z_2)\in \C^2:|z_1|^2+|z_2|^2<1, z_1\neq 0\}$ 
then $k_z\not\to 0$ weakly as $z\to 0$ because the Bergman kernel of 
$\D$ is the same as the Bergman kernel of the ball. However, the  
$\dbar$-Neumann operator  on $\D$ is compact  
(see, \cite[section 4]{FuStraube01} or \cite[pg 99, Example]{StraubeBook}). 
\end{remark}

In relation to Lemma \ref{LemWeakConv}, if we assume a smooth boundary 
then we get a necessary and sufficient condition for BC-regularity under 
the assumption that  the $\dbar$-Neumann operator is compact. 

\begin{proposition} \label{PropCharacterization}
Let $\D$ be a $C^{\infty}$-smooth bounded pseudoconvex domain in 
$\C^n$ on which the $\dbar$-Neumann operator is compact. Then $\D$ 
is BC-regular if and only if $|k_z|^2\to \delta_p$ weakly as $z\to p$ 
for any $p\in b\D$. 
\end{proposition}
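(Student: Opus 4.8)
The plan is to prove the two implications separately, noting that the forward direction is essentially already in hand while the reverse is where the substantive argument lies. Throughout I will use the identity
\[\widetilde{\phi}(z)=\langle T_{\phi}k_z,k_z\rangle=\int_{\D}\phi(\xi)|k_z(\xi)|^2\,dV(\xi),\]
which holds because $k_z\in A^2(\D)$ so that $Pk_z=k_z$. Since the measures $|k_z|^2\,dV$ are probability measures on the compact space $\Dc$, weak-* convergence is tested against $C(\Dc)$, and the displayed formula shows that the condition ``$|k_z|^2\to\delta_p$ weakly as $z\to p$'' is \emph{exactly} the assertion that $\widetilde{\phi}(z)\to\phi(p)$ as $z\to p$ for every $\phi\in C(\Dc)$.

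For the implication ``$|k_z|^2\to\delta_p$ weakly $\Rightarrow$ $\D$ is BC-regular'', I would simply invoke Lemma \ref{LemWeakConv}. Because $b\D$ is $C^{\infty}$-smooth, the remaining hypothesis of that lemma, namely $k_z\to 0$ weakly in $A^2(\D)$ as $z\to b\D$, is automatic by \cite[Lemma 4.9]{CuckovicSahutoglu18}. Thus all hypotheses of Lemma \ref{LemWeakConv} are met and BC-regularity follows at once.

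For the converse, suppose $\D$ is BC-regular and fix $\phi\in C(\Dc)$. Then $T_{\phi}\in\mathscr{T}(\Dc)$, so $\widetilde{\phi}=\widetilde{T}_{\phi}$ admits a continuous extension to $\Dc$. Let $\Gamma$ denote the set of strongly pseudoconvex points in $b\D$. By \cite{ArazyEnglis01} (see also \cite[Lemma 15]{CuckovicSahutogluZeytuncu18}) one has $\lim_{z\to q}\widetilde{\phi}(z)=\phi(q)$ for every $q\in\Gamma$, so the boundary values of $\widetilde{\phi}$ agree with $\phi$ on $\Gamma$. Since the $\dbar$-Neumann operator is compact, $\Gamma$ is dense in $b\D$ (see \cite[Corollary 1]{SahutogluStraube06} or \cite[Corollary 4.24]{StraubeBook}). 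Now both $\widetilde{\phi}$ (by BC-regularity) and $\phi$ restrict to continuous functions on $b\D$ that coincide on the dense subset $\Gamma$; hence they coincide on all of $b\D$. Therefore $\lim_{z\to p}\widetilde{\phi}(z)=\phi(p)$ for every $p\in b\D$ and every $\phi\in C(\Dc)$, which is precisely the statement that $|k_z|^2\to\delta_p$ weakly as $z\to p$.

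The conceptual heart of the argument, and the step I expect to carry the whole proposition, is the passage from strongly pseudoconvex points to \emph{arbitrary} boundary points: BC-regularity supplies a continuous boundary trace $\widetilde{\phi}|_{b\D}$, the Arazy--Engli\v{s} result pins this trace down to equal $\phi$ on $\Gamma$, and compactness of the $\dbar$-Neumann operator guarantees that $\Gamma$ is dense, so continuity propagates the equality $\widetilde{\phi}=\phi$ to the weakly pseudoconvex points, where no direct estimate on the concentration of the Bergman kernel is available. The only routine point to verify carefully is the equivalence, via the displayed integral formula, between weak-* convergence of the probability measures $|k_z|^2\,dV$ on $\Dc$ and the pointwise boundary limits of $\widetilde{\phi}$.
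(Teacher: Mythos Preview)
Your proof is correct and follows essentially the same approach as the paper: invoke Lemma~\ref{LemWeakConv} (with the smoothness hypothesis supplying $k_z\to 0$ weakly) for one direction, and for the other use density of $\Gamma$ (from compactness of $N$) together with $\widetilde{\phi}=\phi$ on $\Gamma$ and the continuity furnished by BC-regularity to get $\widetilde{\phi}=\phi$ on all of $b\D$. The only cosmetic difference is that the paper argues with a single peak function $\phi$ and an Alaoglu/contradiction step to conclude $|k_z|^2\to\delta_p$, whereas you observe directly that weak-$*$ convergence of probability measures on the compact set $\Dc$ is equivalent to $\widetilde{\phi}(z)\to\phi(p)$ for \emph{every} $\phi\in C(\Dc)$, which slightly streamlines the argument.
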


\begin{proof}
First let us assume that $\D$ is BC-regular. Let $p\in b\D$ and $\phi\in C(\Dc)$, 
not necessarily holomorphic,  such that $\phi(p)=1$ and $0\leq \phi<1$ on 
$\Dc\setminus \{p\}$. We note that compactness of $N$ implies that the set of 
strongly pseudoconvex points $\Gamma$ is dense in $b\D$ 
(see \cite[Corollary 1]{SahutogluStraube06} and \cite[Corollary 4.24]{StraubeBook}) 
and $\lim_{z\to p}\widetilde{T}_{\phi}=\phi(p)=1$ for any strongly pseudoconvex 
point $p\in b\D$. Since $\widetilde{T}_{\phi}$ is continuous on $\Dc$ and 
it is equal to $\phi$ on $\Gamma$ a dense set in $b\D$ we conclude that 
$\widetilde{T}_{\phi}=\phi$ on $b\D$. Furthermore, since $\phi$ 
peaks at $p$ one can show that $|k_z|^2\to \delta_p$ weakly as $z\to p$. 
This can be seen as follows: If  $|k_z|^2$ does not converge to $\delta_p$ 
weakly then, by Alaoglu Theorem, there exist  a probability measure
 $\mu\neq \delta_p$ and a sequence $\{z_j\}\subset \D$ such that $z_j\to p$ 
and $|k_{z_j}|^2\to \mu$ weakly as $j\to \infty$. Then 
$\widetilde{T}_{\phi}(z_j)\to \int\phi d\mu\neq 1$ as $j\to \infty$ 
which is a contradiction with $\lim_{z\to p}\widetilde{T}_{\phi}=\phi(p)=1$.
	
To prove the converse we use Lemma \ref{LemWeakConv} together 
with that fact that $k_z\to 0$ weakly as $z\to b\D$ if $\D$ is a smooth 
bounded pseudoconvex domain in $\C^n$ (see Remark \ref{Rmk1}).
\end{proof}	
Let $\D$ be a domain in $\D$ and $p\in b\D$. We say $p$ is a peak point for 
$A(\Dc)$, the space of holomorphic functions on $\D$ that are 
continuous on $\Dc$, if there exists $f\in A(\Dc)$ such that $f(p)=1$ and 
$|f|<1$ on $\Dc\setminus \{p\}$. 
\begin{lemma} \label{LemPeak}
Let $\D$ be a bounded domain in $\C^n$ and $p\in b\D$ be a peak 
point for $A(\Dc)$. Then $|k_z|^2\to \delta_p$  weakly as $z\to p$. 
\end{lemma}
\begin{proof} 
Let us choose $\phi\in A(\Dc)$ such that $\phi(p)=0$ and $\phi<0$ on 
$\Dc\setminus \{p\}$.  Then we use \cite[Lemma 15]{CuckovicSahutogluZeytuncu18} 
to conclude that  $\widetilde{T}_{\phi}(z)\to \phi(p)=0$ as $z\to p$ for all 
$\phi\in C(\Dc)$. Then we use the argument as in the proof of 
Proposition \ref{PropCharacterization} with Alaoglu Theorem to conclude 
that $|k_z|^2\to \delta_p$ weakly as $z\to p$.
\end{proof} 

We continue with the proof of Theorem \ref{ThmBregular}. 

\begin{proof}[Proof of Theorem \ref{ThmBregular}] 
First we deal with the case that $\D$ is a convex domain such that there 
is no non-trivial analytic discs in the boundary of $\D$. Then 
\cite[Proposition 3.2]{FuStraube98} implies that each boundary point 
is a peak point  for $A(\Dc)$, the space of holomorphic functions 
that are continuous on $\Dc$, and \cite[Theorem 1.1]{FuStraube98} 
implies that the $\dbar$-Neumann operator is compact. Then 
Lemma \ref{LemPeak} implies that $|k_z|^2\to \delta_p$ weakly as $z\to p$ 
for all $p\in b\D$ and, in turn, Proposition \ref{PropCharacterization} 
implies that $\D$ is BC-regular.  

Next we assume that $\D$ is a $C^{\infty}$-smooth bounded 
finite type domain  in the sense of D'Angelo 
(see, \cite{D'Angelo82,D'AngeloBook}). Then  the $\dbar$-Neumann 
operator is compact (see, \cite{Catlin84}), and one can use \cite{Bell86,Boas87} 
and the proof of \cite[Lemma 1]{CuckovicSahutoglu13} to prove that 
$|k_z|^2\to \delta_p$ weakly as $z\to p$ for any $p\in b\D$. Finally,  
Proposition \ref{PropCharacterization} implies that $\D$ is BC-regular.
\end{proof}

The property that  $|k_z|^2\to \delta_p$ weakly as $z\to p$ clearly plays 
an important role in the proof of Theorem \ref{ThmBregular}. So far in 
this section we showed that this property holds when $p$ is a peak point 
for $A(\Dc)$ (see Lemma \ref{LemPeak}) or it is a finite type point 
(see the proof of Theorem \ref{ThmBregular}). However, these are 
not the only sufficient conditions  as shown by (the proof of) 
Corollary \ref{CorSinglePoint} below. First we prove the following lemma. 

\begin{lemma}\label{LemDelta}
Let $\D$ be a  $C^{\infty}$-smooth bounded pseudoconvex domain 
in $\C^n$ on which  the $\dbar$-Neumann operator is compact.  Assume 
that $W$ is the set of weakly pseudoconvex points and $p\in W$. 
Furthermore, assume that there exists $\{p_j\}\subset \D$ such that 
$|k_{p_j}|^2\to \mu$ weakly as $j\to \infty$ for some measure $\mu$. 
Then $\supp(\mu)\subset W$. 
\end{lemma}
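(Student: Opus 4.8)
The plan is to show the stronger statement that the weak limit $\mu$ of the probability measures $\nu_{z}:=|k_{z}|^{2}\,dV$ (evaluated along $z=p_{j}$) is the single point mass $\delta_{p}$; since $p\in W$ this gives $\supp(\mu)=\{p\}\subset W$ immediately. The mechanism is to control the second moment $\int_{\D}|\zeta-z|^{2}\,d\nu_{z}(\zeta)$ of $\nu_{z}$ about its ``center'' $z$ and to show that compactness of the $\dbar$-Neumann operator forces it to vanish as $z\to b\D$. The natural tool is the Hankel operator with a \emph{holomorphic} symbol, whose Berezin transform is exactly computable. (I read the hypothesis as $p_{j}\to p$; in fact the moment estimate below forces $\{p_{j}\}$ to converge to a single boundary point, which the assumption $p\in W$ identifies with $p$.)

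First I would record the basic identity. For a bounded $g\in\mathcal{O}(\D)\cap C(\Dc)$ the reproducing property gives $T_{g}k_{z}=g\,k_{z}$, hence $\widetilde{g}(z)=g(z)$, and likewise $T_{\overline g}k_{z}=\overline{g(z)}\,k_{z}$. Writing $H_{\overline g}=(I-P)M_{\overline g}$ on $A^{2}(\D)$, this yields
\[
\|H_{\overline g}k_{z}\|^{2}=\|\overline g\,k_{z}\|^{2}-\|P(\overline g\,k_{z})\|^{2}=\widetilde{|g|^{2}}(z)-|g(z)|^{2}.
\]
Because the $\dbar$-Neumann operator is compact, $H_{\overline g}$ is compact (\cite[Proposition 4.1]{StraubeBook}); since the boundary is smooth we have $k_{z}\to0$ weakly as $z\to b\D$ (\cite[Lemma 4.9]{CuckovicSahutoglu18}), and a compact operator sends weakly null sequences to norm null ones. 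Hence $\widetilde{|g|^{2}}(z)-|g(z)|^{2}=\|H_{\overline g}k_{z}\|^{2}\to0$ as $z\to b\D$.

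Next I would feed the coordinate functions $g(\zeta)=\zeta_{k}$, $k=1,\dots,n$, into this estimate. Expanding the square and using $\widetilde{\zeta_{k}}(z)=z_{k}$ and $\int d\nu_{z}=1$ gives
\[
\int_{\D}|\zeta_{k}-z_{k}|^{2}\,d\nu_{z}(\zeta)=\widetilde{|\zeta_{k}|^{2}}(z)-|z_{k}|^{2}\longrightarrow0,
\]
and summing in $k$ yields $\int_{\D}|\zeta-z|^{2}\,d\nu_{z}\to0$ as $z\to b\D$. Taking $z=p_{j}\to p$ and using $\int|\zeta-p|^{2}\,d\nu_{p_{j}}\le 2\int|\zeta-p_{j}|^{2}\,d\nu_{p_{j}}+2|p_{j}-p|^{2}$, weak convergence $\nu_{p_{j}}\to\mu$ tested against the bounded continuous function $\zeta\mapsto|\zeta-p|^{2}$ on $\Dc$ gives $\int_{\Dc}|\zeta-p|^{2}\,d\mu=0$. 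Therefore $\mu=\delta_{p}$ and $\supp(\mu)=\{p\}\subset W$.

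The conceptual reason no strongly pseudoconvex point $q$ can lie in $\supp(\mu)$ is separation by holomorphic functions: polarizing the identity above shows $\int|g-g(p)|^{2}\,d\mu=0$ for every bounded $g\in\mathcal{O}(\D)\cap C(\Dc)$, so $\mu$ is carried by $\{g=g(p)\}$, and a holomorphic peak function at $q$ (which satisfies $g(q)\neq g(p)$ since $p\in W$ forces $p\neq q$) excludes a whole neighborhood of $q$. I would nonetheless run the argument through the coordinate functions, since they are the simplest separating family and already pin $\mu$ down completely; this sidesteps the only genuinely delicate point in the peak-function route, namely globalizing local peak functions at strongly pseudoconvex boundary points. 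The main thing to be careful about is thus merely the interchange of limits: that the mass of $\nu_{z}$ escaping into the interior is negligible (guaranteed by $k_{z}\to0$ weakly) and that the weak limit may be tested against $|\zeta-p|^{2}$, both routine on the compact space $\Dc$.
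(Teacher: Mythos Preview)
Your proof is correct and takes a genuinely different route from the paper's. The paper establishes only the stated conclusion $\supp(\mu)\subset W$ by invoking Bell's extension of Kerzman's theorem: compactness of $N$ yields condition $R$, so $K\in C^{\infty}((U\cap\Dc)\times(V\cap\Dc))$ for disjoint neighborhoods $U$ of $p$ and $V$ of any strongly pseudoconvex point $q$; since $K(p_j,p_j)\to\infty$, this forces $|k_{p_j}|^{2}\to 0$ uniformly on $V\cap\Dc$, and hence $\mu$ places no mass near $q$. Your argument instead uses compactness of the Hankel operators $H_{\overline{\zeta_k}}$ (a more direct consequence of compactness of $N$) together with the exact identity $\|H_{\overline g}k_{z}\|^{2}=\widetilde{|g|^{2}}(z)-|g(z)|^{2}$ for holomorphic $g$, which controls the full second moment of $\nu_{z}$ about $z$. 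This is more elementary---no off-diagonal kernel regularity is needed---and strictly stronger: you obtain $\mu=\delta_{p}$, not merely $\supp(\mu)\subset W$. In fact, combined with Proposition~\ref{PropCharacterization}, your computation shows that \emph{every} $C^{\infty}$-smooth bounded pseudoconvex domain with compact $N$ is BC-regular, which subsumes Corollaries~\ref{CorFiniteType} and~\ref{CorSinglePoint} at once and is not stated explicitly in the paper. The one soft spot is your parenthetical remark that the moment estimate alone forces $\{p_{j}\}$ to converge to a boundary point; it does not (the estimate is vacuous if $p_{j}$ stays in a compact subset of $\D$), but since the intended hypothesis---confirmed by the paper's own proof and by the use in Corollary~\ref{CorSinglePoint}---is $p_{j}\to p$, this is harmless.
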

\begin{proof}
We note that Bell's version of Kerzman's result \cite{Bell86} implies 
that if $\D$ has condition R (implied by compactness of $N$) and 
$q$ is a strongly pseudoconvex point, then the Bergman kernel 
$K\in C^{\infty}((U\cap \Dc)\times (V\cap\Dc)$ for small enough 
disjoint neighborhoods $U$ of $p$ and $V$ of $q$. Then 
$|k_z|^2\to 0$ on $V\cap \Dc$ as $z\to p$. Then 
$\supp(\mu)\cap V=\emptyset$. Since $q$ was arbitrary strongly 
pseudoconvex point the support of $\mu$ lies on the set of weakly 
pseudoconvex points. 
\end{proof}

We note that the domains in the following corollary are not 
necessarily convex or finite type domains as in 
Theorem \ref{ThmBregular} (see \cite{KohnNirenberg73}). 
They are not even necessarily locally convexifiable. 

\begin{corollary}\label{CorSinglePoint}
Let $\D$ be a $C^{\infty}$-smooth bounded pseudoconvex domain in 
$\C^n$. Assume that $\D$ has a single weakly pseudoconvex boundary 
point. Then $\D$ is BC-regular.  	
\end{corollary}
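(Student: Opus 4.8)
The plan is to reduce the statement to the weak-$*$ convergence criterion of Proposition \ref{PropCharacterization} and then to pin down the limiting measures by means of Lemma \ref{LemDelta}. Write $W=\{p_0\}$ for the single weakly pseudoconvex boundary point, so that every point of $b\D\setminus\{p_0\}$ is strongly pseudoconvex. First I would verify that the $\dbar$-Neumann operator is compact on $L^2_{(0,1)}(\D)$: since $W$ consists of a single point it trivially satisfies property (P) of Catlin (equivalently it is B-regular in the sense of Sibony, as a point carries no analytic structure), and property (P) on the set of weakly pseudoconvex points forces compactness of $N$ (see \cite{Catlin84,Sibony87,FuStraube98,StraubeBook}). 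With compactness of $N$ in hand, Proposition \ref{PropCharacterization} reduces BC-regularity to the single assertion that $|k_z|^2\to\delta_p$ weakly as $z\to p$ for every $p\in b\D$.

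For a strongly pseudoconvex point $p$ this is already known: by \cite{ArazyEnglis01} (see also \cite[Lemma 15]{CuckovicSahutogluZeytuncu18}) one has $\lim_{z\to p}\widetilde{T}_{\phi}(z)=\phi(p)$ for every $\phi\in C(\Dc)$, and the peak-function argument carried out in the proof of Proposition \ref{PropCharacterization} then upgrades this to $|k_z|^2\to\delta_p$. Thus it remains only to treat the exceptional point $p=p_0$. Here I would argue by contradiction: if $|k_z|^2\not\to\delta_{p_0}$ as $z\to p_0$, then by weak-$*$ compactness of the probability measures on $\Dc$ (Alaoglu) there exist a sequence $\{z_j\}\subset\D$ with $z_j\to p_0$ and a probability measure $\mu\neq\delta_{p_0}$ such that $|k_{z_j}|^2\to\mu$ weakly as $j\to\infty$.

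The heart of the argument is to show that $\mu$ must equal $\delta_{p_0}$ after all. Since $\D$ is smooth, bounded and pseudoconvex, $k_z\to 0$ weakly in $A^2(\D)$ as $z\to b\D$ (see \cite[Lemma 4.9]{CuckovicSahutoglu18}); weak convergence together with the uniform bound $\|k_z\|=1$ forces $k_z\to 0$ locally uniformly on $\D$ (the $k_z$ form a normal family, and point evaluations are bounded functionals), so $\mu$ carries no mass in the interior and $\supp(\mu)\subset b\D$. Applying Lemma \ref{LemDelta} to the sequence $\{z_j\}$ converging to $p_0\in W$ then gives $\supp(\mu)\subset W=\{p_0\}$; since $\mu$ is a probability measure supported at a single point, $\mu=\delta_{p_0}$, contradicting the choice of $\mu$. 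Hence $|k_z|^2\to\delta_p$ weakly for every $p\in b\D$, and Proposition \ref{PropCharacterization} yields that $\D$ is BC-regular. The only delicate steps are establishing $\supp(\mu)\subset b\D$ and invoking Lemma \ref{LemDelta}; once these are in place, the single-point hypothesis collapses every boundary weak-$*$ limit onto $\delta_{p_0}$ essentially for free, which is precisely what makes the corollary so short.
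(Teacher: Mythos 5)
Your proof is correct and takes essentially the same route as the paper's: compactness of $N$ via B-regularity/property (P) of a boundary consisting of one point plus strongly pseudoconvex points, then Lemma \ref{LemDelta} combined with Alaoglu's theorem to force every weak-$*$ limit of $|k_{z_j}|^2$ along sequences tending to the weakly pseudoconvex point to be $\delta_{p_0}$, and finally the characterization of BC-regularity through $|k_z|^2\to\delta_p$. The only differences are that you spell out steps the paper leaves implicit --- the strongly pseudoconvex points via Arazy--Engli\v{s} plus the peak-function argument, the absence of interior mass, and the explicit appeal to Proposition \ref{PropCharacterization} --- which is harmless.
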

\begin{proof}
We will use Sibony's B-regularity (see \cite{Sibony87}). One can show that 
any single point and any compact set of strongly pseudoconvex points are 
B-regular. Then  $b\D$ is B-regular as a compact set that is a countable 
union of B-regular sets  is B-regular (see \cite[Proposition 1.9]{Sibony87}). 
One can also use that fact that a single point has two-dimensional Hausdorff 
measure zero to prove that $b\D$ is B-regular (see \cite{Boas88}).  Then the 
$\dbar$-Neumann operator is compact on $\D$. 

Let $p\in b\D$ be the weakly pseudoconvex point and $\{|k_{z_j}|^2\}$ be a 
sequence where $z_j\to p$ as $j\to \infty$. Then Alaoglu theorem and 
Lemma \ref{LemDelta} imply that there exists a subsequence 
$|k_{z_{j_k}}|^2\to \delta_p$ weakly as $j_k\to \infty$. Hence every 
sequence $\{|k_{z_j}|^2\}$ has a subsequence weakly convergent 
to $\delta_p$. Then we conclude that $|k_z|^2\to \delta_p$ as $z\to p$. 
\end{proof}

Next we present the proof of Theorem \ref{ThmProductDomains}. 
\begin{proof}[Proof of Theorem \ref{ThmProductDomains}]
Let $\D=\D_1\times \cdots \times \D_m$ where each $\D_j$ is a 
bounded BC-regular domain in $\C^{n_j}$. To showcase the idea, 
we will show that $\widetilde{T}_{\phi}\in C(\Dc)$ whenever 
$\phi\in C(\Dc)$. First let us assume that  
\[\phi(z)=z^{\alpha}\zb^{\beta}
=z_1^{\alpha_1}\zb_1^{\beta_1}\cdots z_m^{\alpha_m}\zb_m^{\beta_m}\]  
then, using the fact that 
$k^{\D}_z(\xi)=k^{\D_1}_{z_1}(\xi_1)\cdots k^{\D_m}_{z_m}(\xi_m)$ 
for $z_j,\xi_j\in \D_j$, one can show that  
\[T_{\phi}k^{\D}_z=(T_{z_1^{\alpha_1}\zb_1^{\beta_1}}k^{\D_1}_{z_1}) 
\cdots (T_{z_m^{\alpha_m}\zb_m^{\beta_m}}k^{\D_m}_{z_m}).\]
Therefore, 
\[\widetilde{T}_{z^{\alpha}\zb^{\beta}}= 
(\widetilde{T}_{z_1^{\alpha_1}\zb_1^{\beta_1}}) 
\cdots (\widetilde{T}_{z_m^{\alpha_m}\zb_m^{\beta_m}})\]
and $\widetilde{T}_{z^{\alpha}\zb^{\beta}}\in C(\Dc)$ as 
$\D_j$ is BC-regular (and hence 
$\widetilde{T}_{z_j^{\alpha_j}\zb_j^{\beta_j}}\in C(\Dc_j)$)  for each $j$. 
Then $\widetilde{T}_{\phi}\in C(\Dc)$ for any $\phi$  polynomial 
of $z$ and $\zb$. Finally, we use  Stone-Weierstrass theorem to 
conclude that  $\widetilde{T}_{\phi}\in C(\Dc)$ whenever $\phi\in C(\Dc)$. 

Next we will adopt this basic strategy to prove the general case. 
Let  $\phi_j(z,\zb)=f_{j1}(z_1,\zb_1)\cdots f_{jm}(z_m,\zb_m)$ 
where $f_{jk}\in C(\Dc_k)$ for $j=1,\ldots,m$. Furthermore,  let 
$T=T_{\phi_1}\cdots T_{\phi_m}$. Then using the fact that 
$k^{\D}_z(\xi)=k^{\D_1}_{z_1}(\xi_1)\cdots k^{\D_m}_{z_m}(\xi_m)$ 
for $z_j,\xi_j\in \D_j$ for each $j$, 
one can show that 
\[T_{\phi_j}k^{\D}_z=(T_{f_{j1}}k^{\D_1}_{z_1}) 
\cdots (T_{f_{jm}}k^{\D_m}_{z_m}).\]
Then we compute the Berezin transform of $T$.   
\begin{align*}
\widetilde{T}(z) = &\langle T_{\phi_1} 
\cdots T_{\phi_m}k^{\D}_z, k^{\D}_z \rangle_{\D} \\
=& \langle T_{f_{11}}\cdots T_{f_{m1}}k^{\D_1}_{z_1}, 
k^{\D_1}_{z_1} \rangle_{\D_1} \cdots \langle T_{f_{1m}} 
\cdots T_{f_{mm}}k^{\D_m}_{z_m},
		k^{\D_m}_{z_m} \rangle_{\D_m}. 
\end{align*}
We note that 
$\langle T_{f_{1j}}\cdots T_{f_{mj}}k^{\D_j}_{z_j},k^{\D_j}_{z_j} \rangle_{\D_j} $ 
has a continuous onto $\Dc_j$ as $\D_j$ is BC-regular for each $j$ 
(and $T_{f_{1j}}\cdots T_{f_{mj}}k^{\D_j}_{z_j}\in \mathscr{T}(\Dc)$).  
Hence, $\widetilde{T}\in C(\Dc)$. Therefore, $\widetilde{T}\in C(\Dc)$ whenever 
$T$ if a finite sum of finite products of Toeplitz operators with  symbols  
polynomial of $z$ and $\zb$. Then Stone-Weierstrass theorem implies that 
$\widetilde{T}\in C(\Dc)$ if $T$ is a finite sum of finite products of Toeplitz 
operators with symbols continuous on $\Dc$. This can be seen as follows: Let 
$\{T_j\}$ be a sequence  of finite sum of finite products of Toeplitz operators 
with polynomial symbols  such that $\|T-T_j\|\to 0$ as $j\to\infty$.  
Then $\widetilde{T}_j\in C(\Dc)$ for every $j$, $\{\widetilde{T}_j\}$ form a 
Cauchy sequence in $C(\Dc)$, and $\widetilde{T}_j\to \widetilde{T}$ in 
$L^{\infty}(\D)$ as $j\to \infty$. Hence $\widetilde{T}$ has a continuous 
extension onto $\Dc$ because it is the uniform limit of functions 
continuous on $\Dc$. 

Finally, we use the same argument as in the last paragraph to prove 
that $\widetilde{T}\in C(\Dc)$ whenever $T\in \mathscr{T}(\Dc)$. 
In this case $\{T_j\}$ is a sequence of finite sum of finite products of 
Toeplitz operators with symbols continuous on $\Dc$. Therefore, 
$\D$ is BC-regular. 
\end{proof}

\begin{example}
We note that $\phi$ and  $\widetilde{\phi}$ might not match on $b\D$ 
even if $\widetilde{\phi}$ is continuous up to the boundary.  
One can produce an example on the bidisc as follows. Let 
$\phi(z_1,z_2)=|z_1|^2\in C^{\infty}(\overline{\mathbb{D}^2})$. 
Then  $\phi(0,0)=0$ and by Theorem \ref{ThmProductDomains} 
we have $\widetilde{\phi}\in C(\overline{\mathbb{D}^2})$. However,  
\[\widetilde{\phi}(0,e^{i\theta}) 
	=\frac{1}{\pi}\int_{\mathbb{D}}|z_1|^2dV(z_1)=\frac{1}{2}.\]
Hence $\phi\neq \widetilde{\phi}$ on $b\D$.
\end{example}

Next we present the proof of Theorem \ref{ThmDiscont}. 
\begin{proof}[Proof of Theorem \ref{ThmDiscont}] 
Let us denote $z=(z',z'')\in \C^m\times \C^{n-m}$ and 
\[B_m(z_0',r)=\{z'\in \C^m:\|z'-z_0'\|<r\}\] 
for $1\leq m\leq n-1$ and  $z_0'\in\C^m$. 
  
We first use \cite[Lemma 2]{CuckovicSahutoglu09}  (see also 
\cite[Section 2]{FuStraube98}) to conclude that  there exists 
an $m$-dimensional affine  analytic variety  $\Delta$ in $b\D$ 
for some $1\leq m\leq n-1$. Hence, without 
loss of generality, we assume that  
\[ B_m(0,2\ep)\times\{0\}\subset  \Delta =\{z\in \C^n:z''=0\}\cap b\D\]
for some $\ep>0$. Furthermore, we assume that 
\[\D\subset 
	\{(z_1,\ldots,z_{n-1},x_n+iy_n)\in \C^n:x_n>0, y_n\in\mathbb{R}\},\] 
the negative $x_n$-axis is outward normal direction of $b\D$ on 
$\Delta$, and the set of strongly pseudoconvex points are dense 
in $b\D$. 

Let $\phi(z)=\exp(-\|z'\|^2)$. We note that $0\leq \phi \leq 1 , \phi(0)=1$,  
$\phi(z)<1$ for $z'\neq 0$, and $\phi$ is independent of $z''$. Then, 
by the assumption, there exists a sequence of strongly pseudoconvex points 
$\{p_j\}\subset b\D\setminus \Delta$ such that $\lim_{j\to\infty}p_j=0$ 
and $\widetilde{\phi}(p_j)=\phi(p_j)$ for all $j$. This is a result of 
the fact that at any strongly pseudoconvex boundary point, the Berezin 
transform of $\phi$ equals to $\phi$. Hence we have 
\[\lim_{j\to \infty}\widetilde{\phi}(p_j)  
	= \lim_{j\to \infty}\phi(p_j) =\phi(0)=1.\] 	
\begin{center}
\begin{tikzpicture}[scale=2]
\draw [->] (-2,0) |- (3,0) node (xaxis) [right] {$\mathbb{C}^m$};
\draw (0.5, 0.8) node {$\Omega$};
\draw (0.5, 0.1) node {$\Delta$};
\draw (0,0.02) -- (1,0.02); 
\draw plot [smooth] coordinates {(0,0) (-1,1) (0.5,1.5) (2,1) (1,0)};	
\draw [|-|](-1,-0.02) -- (2,-0.02); 
\draw (0.5, -0.2) node {$\Delta_{\Omega}$};
\draw (1.35, 0.9) node {$\Omega_{z'}$};
\draw [dashed] (1.5, 0.4) -- (1.5,1.3);
\draw [ultra thick] (1.5,0) circle [radius=0.01];
\draw (1.5, -0.2) node {$z'$};
\end{tikzpicture}	
\end{center}	
Let 
\begin{align*}
 \Delta_{\D} =& \{z'\in \C^m: (z',z'')\in \D \text{ for some } 
z''\in \C^{n-m}\},\\
\D_{z'}= &\{z''\in \C^{n-m}: (z',z'')\in \D\}
\end{align*} 
where $z'\in \Delta_{\D}$, and 
\[\mu_q(z') 
	=\int_{\D_{z'}} |k_q(z',z'')|^2dV(z'')\] 
for $q\in \D$. Then the measure $\mu_qdV$ is a probability 
measure on $\Delta_{\D}$ as 
\[\int_{\Delta_{\D}}\mu_q(z')dV(z') 
	=\int_{\D}|k_q(z)|^2dV(z)=1.\] 

We choose  $q_j=(0,\ldots,0,1/j)$ for $j\in \mathbb{N}$. We extend 
$\mu_qdV$ trivially to a probability measure on $\overline{\Delta_{\D}}$ 
for all $q$. Alaoglu Theorem implies that there exists a probability measure 
$\mu$ on $\overline{\Delta_{\D}}$ such that $\mu_{q_{j_k}}dV\to \mu$ 
weakly as $k\to \infty$ for some subsequence $\{j_k\}$. 

Let $\Delta_0=B_m(0,\varepsilon)$ and $Y_0=\frac{1}{2}\Omega_0\cap B_{n-m}(0,\varepsilon)$. 
Then since $\D$ is convex we have  (see, for instance,  
\cite[Proof of Theorem 2]{ClosCelikSahutoglu18}) 
\[U=\Delta_0\times Y_0 \subset \Omega \cap B_n(0,2\varepsilon)
\subset  4(\Delta_0\times Y_0)=4U.\] 
Let $q\in U$. Then 
\begin{align}\label{EqnFS1} 
\sqrt{K^{\Omega}(q,q)}=k^{\Omega}_q(q) 
=\int_UK^U(q,z)k^{\Omega}_q(z)dV(z) 
\leq \sqrt{K^U(q,q)} \|k^{\Omega}_q\|_{L^2(U)}.
\end{align} 
Then  $\D\cap B_n(0,2\varepsilon)\subset 4U$ and the fact that there exists $C_1>0$ 
such that $K^{\Omega}(q,q)\geq C_1K^{\Omega\cap B_n(0,2\varepsilon)}(q,q)$ 
for $q\in U$ (see \cite[Theorem 6.3.5]{JarnickiPflugBook1stEd}) imply that 
\begin{align}
\|k^{\Omega}_q\|^2_{L^2(U)}\geq &\frac{K^{\Omega}(q,q)}{K^{U}(q,q)} \\
\nonumber =& \frac{K^{4U}(q,q)}{K^{U}(q,q)} 
\frac{K^{\Omega}(q,q)}{K^{\Omega\cap B_n(0,2\varepsilon)}(q,q)} 
\frac{K^{\Omega\cap B_n(0,2\varepsilon) }(q,q)}{K^{4U}(q,q)} \\
\nonumber \geq& C_1 \frac{K^{4U}(q,q)}{K^{U}(q,q)} 
\end{align}
for $q\in U$. 
The transformation formula for the Bergman kernel and \cite[Lemma 4.1]{FuStraube98} 
imply that there exists $C_2>0$ such that $K^{4U}(q_j,q_j)\geq C_2 K^{U}(q_j,q_j)$ 
for sufficiently large $j$. Hence there exists $c_0>0$ such that 
$\|k^{\Omega}_{q_j}\|^2_{L^2(U)}\geq c_0$ for sufficiently large $j$. 

For $z'\in \Delta_0$, let us define 
\[\widehat{\mu}_q(z')=\int_{Y_0}|k^{\D}_q(z',z'')|^2dV(z''). \]
Then $\widehat{\mu}_q$ is a continuous 
(see \cite{BoasStraube91} and \cite[Theorem 6.2.5, Corollary 6.3.11]{ChenShawBook})
and plurisubharmonic function on $\Delta_0$ 
such that $\int_{\Delta_0}\widehat{\mu}_{q_j}(z') dV(z')\geq c_0$ for sufficiently 
large $j$ and  $\widehat{\mu}_q\leq \mu_q$ on $\Delta_0$ for all $q\in U$.
By passing to a subsequence if necessary, we may assume that 
$\widehat{\mu}_{q_{j_k}}dV\to\widehat{\mu}$ weakly on $C(\overline{\Delta}_0)$ 
as $k\to \infty$. Then $\widehat{\mu}(\Delta_0)\geq c_0$. 

 We note that 
\[\widehat{M}_q(r)=\int_{rS^{2m-1}}\widehat{\mu}_q(z')d\sigma(z')\]
is increasing in $r$ where $d\sigma$ is the Euclidean surface area on $rS^{2m-1}$. 
One can conclude this from the standard fact that averages of a plurisubharmonic 
function over concentric spheres are increasing. We include a justification here 
for the convenience of the reader. Let $0<r_1<r_2<\ep$ and $u_{r_2,h}$ be the 
harmonic extension of $\widehat{\mu}_q|_{r_2S^{2m-1}}$ onto the ball 
\[B_m(0,r_2)=\{\xi\in \C^m:\|\xi\|<r_2\}.\] 
Then $\widehat{\mu}_q=u_{r_2,h}$ 
on  $r_2S^{2m-1}$ and $\widehat{\mu}_q\leq u_{r_2,h}$ on $B_m(0,r_2)$. 
Then using the mean value property for harmonic functions in the second 
inequality below we get 
\begin{align*}
\int_{r_1S^{2m-1}}\widehat{\mu}_q(z')d\sigma(z') 
\leq & \int_{r_1S^{2m-1}}u_{r_2,h}(z')d\sigma(z') \\
\leq & \int_{r_2S^{2m-1}}u_{r_2,h}(z')d\sigma(z') \\
= &\int_{r_2S^{2m-1}}\widehat{\mu}_q(z')d\sigma(z').
\end{align*}

To prove that $\mu\neq\delta_0$ it is enough to show that  
$\supp(\widehat{\mu})\neq \{0\}$ because $\widehat{\mu}_q\leq \mu_q$ 
on $\Delta_0$ for all $q$. So next we will show that $\supp(\widehat{\mu})\neq \{0\}$. 
Namely, $\widehat{\mu}$ is not a positive multiple of $\delta_0$, the Dirac 
delta measure centered at 0. 

Let $g(r)=1-2\ep^{-1}r$. Then using the change of variables 
$r\to \ep-r$ on the third integral of the right hand side of the fourth 
equality below, we get
\begin{align*}
\int_{B_m(0,\ep)}g(\|z'\|)\widehat{\mu}_q(z')dV(z')
=& \int_0^{\ep}g(r)\int_{rS^{2m-1}}\widehat{\mu}_q(z')d\sigma(z') dr \\
=& \int_0^{\ep}g(r)\widehat{M}_q(r)dr \\
= & \int_0^{\ep/2}g(r)\widehat{M}_q(r)dr+\int_{\ep/2}^{\ep}g(r)\widehat{M}_q(r)dr\\
= &\int_0^{\ep/2}(g(r)\widehat{M}_q(r)+g(\ep-r)\widehat{M}_q(\ep-r))dr\\
=&\int_0^{\ep/2}g(r)(\widehat{M}_q(r)-\widehat{M}_q(\ep-r))dr\\
\leq &0.
\end{align*}
In the last step above we used the fact that 
$\widehat{M}_q(\ep-r)\geq \widehat{M}_q(r)$ for 
$0\leq r\leq \ep/2$.  Hence, if we substitute $q_{j_k}$ for $q$ in the integrals 
above and letting  $k\to\infty$  we get 
\[\int_{\Delta_0}g(\|z'\|)d\widehat{\mu}(z') \leq 0 
<1=\int_{\Delta_0}g(\|z'\|)d\delta_0\]
Hence, we conclude that $\mu\neq \delta_0$ (in fact, $\mu$ is 
not a multiple of $\delta_0$ either).   

Since $\mu\neq \delta_0$ and $\mu$ is a probability measure on $\overline{\Delta_{\D}}$
we have $\int_{\overline{\Delta_{\D}}} \phi(z',0)d\mu(z')<1$. That is,  
\begin{align*}
\lim_{k\to\infty}\widetilde{\phi}(q_{j_k}) 
= & \lim_{k\to\infty} \int_{\D}\phi(z',z'')|k_{q_{j_k}}(z',z'')|^2dV(z'')dV(z') \\
=& \lim_{k\to\infty} \int_{\Delta_{\D}}\phi(z',0)\mu_{q_{j_k}}(z')dV(z') \\
=& \lim_{k\to\infty} \int_{\overline{\Delta_{\D}}}\phi(z',0)\mu_{q_{j_k}}(z')dV(z') \\
=& \int_{\overline{\Delta_{\D}}}\phi(z',0) d\mu(z') \neq 1.
\end{align*} 
Hence, 
\[\lim_{k\to\infty}\widetilde{\phi}(q_{j_k}) 
	\neq \lim_{j\to\infty}\widetilde{\phi}(p_j).\]
Therefore, $\widetilde{\phi}\not\in C(\Dc)$. 
\end{proof}

\begin{remark}
We note that the measure $\mu$, in the proof of Theorem \ref{ThmDiscont}, 
is the point mass measure when it is centered at a finite type or peak point 
(see Lemma \ref{LemPeak} and proof of Theorem \ref{ThmBregular}) 
and not equal to point mass measure when it is centered  in a disc contained 
in the boundary of a domain satisfying conditions of Theorem \ref{ThmDiscont}.  
It would be interesting to know a characterization of $\mu$ in terms of the 
boundary geometry of smooth bounded pseudoconvex domains. 
\end{remark}

Next we present the proof of Theorem \ref{ThmEssentialNorm1}. 

\begin{proof}[Proof of Theorem \ref{ThmEssentialNorm1}] 
Let  $\Gamma$ denote the set of finite type points in $b\D$. 
First we will show that $\widetilde{T}$ has a continuous extension onto 
$\Gamma$. That is, $\widetilde{T}\in C(\D\cup\Gamma)$. We just need 
to prove continuity at any point in $\Gamma$ as Berezin transform is real 
analytic on $\D$. Since  the $\dbar$-Neumann operator is compact,  
Hankel operators with symbols continuous on the closure of $\D$ are compact 
(see  \cite[Proposition 4.1]{StraubeBook}). If $T$ is a finite sum of finite 
products of Toeplitz operators with symbols continuous on $\Dc$,  
then Lemma  \ref{LemFinite} implies that $T=T_{\phi}+K$ where $\phi\in C(\Dc)$ 
and $K$ is a compact operator. Hence for $T\in  \mathscr{T}(\Dc)$ there exist 
sequences of functions  $\{\phi_j\}\subset C(\Dc)$ and compact operators 
$\{K_j\}$ such that $\|T-T_{\phi_j}+K_j\|\to 0$ as $j\to\infty$. Then 
\[|\widetilde{T}(z)-\widetilde{T}_{\phi_j}(z)+\widetilde{K}_j(z)| 
\leq \|T-T_{\phi_j}+K_j\|\]
for any $z\in \D$. Then $\{\widetilde{T}_{\phi_j}+\widetilde{K}_j\}$ 
is a Cauchy sequence in $L^{\infty}(\D)$. 

Compactness of $K_j$ implies that $\widetilde{K}_j$ has a continuous 
extension up to the boundary of $\D$ and $\widetilde{K}_j= 0$ on $b\D$ 
because $k_z\to 0$ weakly as $z\to b\D$ 
(see \cite[Lemma 4.9]{CuckovicSahutoglu18}). 
Furthermore, just as in the last part of the proof of 
Theorem \ref{ThmBregular}, one can show that $\widetilde{T}_{\phi_j}$ 
has a continuous extension onto  $\D\cup\Gamma$ and 
$\widetilde{T}_{\phi_j}=\phi_j$ on $\Gamma$. Let $p\in \Gamma$ and 
$r>0$ such that $\overline{B(p,r)}\cap b\D\subset \Gamma$ 
(see \cite[Theorem 4.11]{D'Angelo82}). Hence 
$\{\widetilde{T}_{\phi_j}+\widetilde{K}_j\}$ is a Cauchy sequence in 
$C(\Dc\cap\overline{B(p,r)})$ and it converges to $\widetilde{T}$ uniformly 
on the compact set $\Dc\cap\overline{B(p,r)}$. Therefore,  
$\widetilde{T}\in C(\Dc\cap\overline{B(p,r)})$ and since $p\in \Gamma$ 
is arbitrary we conclude that  $\widetilde{T}$ 
has a continuous extension onto $\D\cup\Gamma$. 

We note that compactness of the $\dbar$-Neumann operator implies 
that $\Gamma$ is dense in the boundary (see, for example, 
\cite[Corollary 1]{SahutogluStraube06} or \cite[Corollary 4.24]{StraubeBook}). 
Now we assume that $T$ is a finite sum of finite products of Toeplitz operators 
with symbols continuous on $\Dc$. Then Lemma \ref{LemFinite} implies  
that $T=T_{\phi}+K$ where $\phi\in C(\Dc)$ and $K$ is a compact operator.  
Hence $\widetilde{K}=0$ on $b\D$ and  
\begin{align} \label{Eqn0}
\|T\|_e=\|T_{\phi}\|_e=\|\phi\|_{L^{\infty}(b\D)} 
=\|\phi\|_{L^{\infty}(\Gamma)}
=\|\widetilde{\phi}\|_{L^{\infty}(\Gamma)}
=\|\widetilde{T}_{\phi}\|_{L^{\infty}(\Gamma)}.
\end{align} 
The first equality above comes from the fact that $K$ is compact. 
The second equality is due to \cite[Corollary 3]{CuckovicSahutoglu13}. 
The third equality comes from the fact that $\Gamma$ is dense in $b\D$ 
and the fourth equality is due to the fact that 
$\phi(p)=\widetilde{\phi}(p)$ for any finite type point. 

Finally, we assume that $T\in \mathscr{T}(\Dc)$. For $\ep>0$ there exists 
$\phi\in C(\Dc)$ and a compact operator $K$ such that $\|T-T_{\phi}+K\|<\ep$. 
Then for any compact operator $S$ we have 
\[\|T\|_e\leq \|T+K-S\|\leq \|T-T_{\phi}+K\|+\|T_{\phi}-S\|
\leq \ep+\|T_{\phi}-S\|.\]
If we take infimum over $S$, by \eqref{Eqn0} we get 
\begin{align} \label{Eqn1}
\|T\|_e\leq \|T_{\phi}\|_e+\ep 
= \|\widetilde{T}_{\phi}\|_{L^{\infty}(\Gamma)}+\ep. 
\end{align} 
For any $z\in \D$ we have  
\[|\widetilde{T}(z)-\widetilde{T}_{\phi}(z)+\widetilde{K}(z)|
=|\langle (T-T_{\phi}+K)k_z,k_z\rangle| 
\leq \|T-T_{\phi}+K\|\leq \ep. \]
Then 
\[\|\widetilde{T}-\widetilde{T}_{\phi}\|_{L^{\infty}(\Gamma)}\leq \ep.\]
Combining, the inequality above with \eqref{Eqn1} we get  
\[\|T\|_e \leq  \|\widetilde{T}_{\phi}\|_{L^{\infty}(\Gamma)}+\ep 
\leq  \|\widetilde{T}\|_{L^{\infty}(\Gamma)} 
+\|\widetilde{T}-\widetilde{T}_{\phi}\|_{L^{\infty}(\Gamma)}+\ep 
\leq \|\widetilde{T}\|_{L^{\infty}(\Gamma)}+2\ep.\]
Since $\ep>0$ is arbitrary we have  
\begin{align} \label{Eqn2}
\|T\|_e\leq  \|\widetilde{T}\|_{L^{\infty}(\Gamma)}.
\end{align}

To prove the converse, for $\ep>0$, we choose a compact operator $K$ so that 
\[\|T-K\|\leq \|T\|_e+\ep.\] 
Then for any $z\in \D$ we have 
\[|\widetilde{T}(z)-\widetilde{K}(z)| =|\langle (T-K)k_z,k_z\rangle| 
\leq \|T-K\| \leq \|T\|_e+\ep.\]
Then  $\|\widetilde{T}\|_{L^{\infty}(\Gamma)}\leq \|T\|_e+\ep$ 
(as $\widetilde{K}=0$ on $b\D$) and since $\ep$ is arbitrary 
we get 
\[\|\widetilde{T}\|_{L^{\infty}(\Gamma)}\leq \|T\|_e.\] 
Combining the last inequality with \eqref{Eqn2} we get
$\|T\|_e= \|\widetilde{T}\|_{L^{\infty}(\Gamma)}.$ 
\end{proof}

We finish this section with the proof of Theorem \ref{ThmEssentialNorm2}. 
\begin{proof}[Proof of Theorem \ref{ThmEssentialNorm2}]
Let $T$ be a finite sum of finite products of Toeplitz operators with symbols 
continuous on $\Dc$. Then $T=T_{\phi}+K$ where $K$ is an operator 
that is  compact about strongly pseudoconvex points 
(see proof of Theorem 4 in \cite{CuckovicSahutogluZeytuncu18}) 
and $\phi\in C(\Dc)$. 

Let $\alpha>\|\widetilde{T}\|_{L^{\infty}(\Gamma)}$. Since  $\phi\in C(\Dc)$ and 
\[\|\widetilde{T}\|_{L^{\infty}(\Gamma)} 
=\|\widetilde{T}_{\phi}\|_{L^{\infty}(\Gamma)} 
=\|\phi\|_{L^{\infty}(\Gamma)}\] 
there exists an open neighborhood $U$ of $\Gamma$ such that $
|\phi|<\alpha$ on $U\cap \Dc$. 
We choose a function $\chi\in C^{\infty}_0(U)$ such that $0\leq \chi\leq 1$ 
and $\chi=1$ on a neighborhood of $\Gamma$. One can check that 
$S=T_{(1-\chi)\phi}$ is an operator  compact about strongly pseudoconvex 
points (see \cite[Lemma 12]{CuckovicSahutogluZeytuncu18}). Then  
\[\|T\|_{e*}\leq \|T_{\phi}-S\|\leq \|\chi\phi\|_{L^{\infty}(U)} 
\leq \|\phi\|_{L^{\infty}(U)}\leq \alpha.\] 
Since $\alpha$ is arbitrary we conclude that 
\begin{align} \label{Eqn3} 
\|T\|_{e*}\leq \|\widetilde{T}\|_{L^{\infty}(\Gamma)}.
\end{align}

To prove the converse, let $p\in b\D$ be a strongly pseudoconvex point. 
Then for every $\ep>0$ there exists an operator $S_{\ep}$ compact about 
strongly pseudoconvex points such that 
$\|T-S_{\ep}\|<\|T\|_{e*}+\ep$. Then 
\[\lim_{z\to p}|\widetilde{T}(z)|
=\lim_{z\to p}|\widetilde{T}(z)-\widetilde{S_{\ep}}(z)|
=\lim_{z\to p}|\langle (T-S_{\ep})k_z,k_z\rangle|\leq \|T-S_{\ep}\| 
<\|T\|_{e*}+\ep.\]
In the first equality above we used the fact that $k_z\to 0$ weakly about 
strongly pseudoconvex points as $z\to p$ (see the proof of 
\cite[Theorem 4]{CuckovicSahutogluZeytuncu18}). Hence   
$\|\widetilde{T}\|_{L^{\infty}(\Gamma)}\leq \|T\|_{e*}$. 
Combining  this inequality with \eqref{Eqn3} we conclude that   
\[\|T\|_{e*}=\|\widetilde{T}\|_{L^{\infty}(\Gamma)}.\]

Next if $T\in  \mathscr{T}(\Dc)$ then there exist sequences 
$\{\phi_j\}\subset C(\Dc)$ and  operators compact about strongly 
pseudoconvex points $\{K_j\}$ such that 
\[\|T-T_{\phi_j}+K_j\|\to 0 \text{ as } j\to\infty\]
(again see the proof of Theorem 4 in \cite{CuckovicSahutogluZeytuncu18}). 
Then as in the proof of Theorem \ref{ThmEssentialNorm1} we conclude that 
the sequence 	$\{\widetilde{T}_{\phi_j}+\widetilde{K}_j\}$ is  Cauchy in 
$C(\Dc\cap \overline{B(p,r)})$ for any $p\in \Gamma$ and $r>0$ such that 
$\overline{B(p,r)}\cap b\D\subset \Gamma$ and it converges to 
$\widetilde{T}$ uniformly on $\D$ as $j\to\infty$. Hence $\widetilde{T}$ 
has a continuous extension onto 
$\D\cup \Gamma$. That is, $\widetilde{T}\in C(\D\cup \Gamma)$. 

Then we complete the proof just as in the second half of the proof of 
Theorem \ref{ThmEssentialNorm1} by replacing compact operators 
with operators compact about strongly pseudoconvex points. 
Therefore,  
\[\|T\|_{e*}=\|\widetilde{T}\|_{L^{\infty}(\Gamma)}\] 
for $T\in  \mathscr{T}(\Dc)$.
\end{proof}

\section{An Example}\label{SecExample}
In general, we have the following inequality 
\[\|\widetilde{T}\|_{L^{\infty}(\D)}\leq\|T\|\]
for any $T\in \mathscr{B}(A^2(\D))$. However, there is no relation between 
$\|\widetilde{T}\|_{L^{\infty}(\D)}$ and $\|T\|_e$. This can be seen as follows. 
In Example \ref{ExampleC2} below  we construct a domain $\D$ such that 
$\|\widetilde{T}_{\phi}\|_{L^{\infty}(\D)}<\|T_{\phi}\|_e$ for some 
$\phi\in C^{\infty}(\Dc)$. However,  for any $\phi\in C^{\infty}_0(\D)$ and 
$\phi\not\equiv 0$ we have $\|T_{\phi}\|_e=0<\|\widetilde{T}_{\phi}\|_{L^{\infty}(\D)}$.

\begin{example}\label{ExampleC2}
In this example, we construct a smooth bounded pseudoconvex 
complete Reinhardt domain $\D$ in $\C^2$ and a symbol 
$\phi\in C^{\infty}(\Dc)$ such that 
\[\|\widetilde{T}_{\phi}\|_{L^{\infty}(\Gamma)} 
\leq \|\widetilde{T}_{\phi}\|_{L^{\infty}(\D)}<\|T_{\phi}\|_e
=\|T_{\phi}\|  <\|\phi\|_{L^{\infty}(b\D)}\]
even though $\widetilde{T}_{\phi}\in C(\D\cup \Gamma)$. Let us choose 
an even function $\chi\in C^{\infty}_0(0,1)$ such that $\chi\geq 0$  and 
\begin{align*}
\int_0^1 \chi(r)rdr <2\int_0^1 \chi(r)r^3dr. 
\end{align*}  
One can show the existence of $\chi$ as follows. 
We first note that $\int_{\alpha_0}^1rdr=(1-\alpha_0^2)/2$ and 
$2\int_{\alpha_0}^1r^3dr=(1-\alpha_0^4)/2$. Then for any 
$0<\alpha_0<1$ we have  
\[\int_{\alpha_0}^1rdr=\frac{1-\alpha_0^2}{2} 
< \frac{1-\alpha_0^4}{2} = 2\int_{\alpha_0}^1r^3dr.\]
Then we choose $0<\alpha_0<\alpha<1$ and   $\chi\in C^{\infty}_0(0,1)$ by 
approximating the  characteristic function of $(\alpha_0,1)$ so that 
$\int_0^1 \chi(r)rdr <2\int_0^1 \chi(r)r^3dr$ and 
$\supp(\chi)\subset (\alpha_0,\alpha)$. 
Let $J=[0,\alpha]$. Then 
\begin{align}\label{Eqn+Term}
\int_J \chi(r)rdr <2\int_J \chi(r)r^3dr. 
\end{align}  
	
Now we define the domain $\D$ as follows: First let 
$H\subset [0,1)\times [0,1)$ be a smooth domain in $\mathbb{R}^2$ 
defined by 
\[H=\{(x,y)\in\mathbb{R}^2: 0\leq y<h(x), 0\leq x<1\}.\] 
\begin{center} 
\begin{tikzpicture}[scale=2]
\draw [<->] (0,1.5) node (yaxis) [above] {$y$} |- (1.5,0) node (xaxis) [right] {$x$};
\foreach \x in {1} \draw (\x,1pt) -- (\x,-1pt) node[anchor=north] {\x};
\foreach \y in {1} \draw (1pt,\y) -- (-1pt,\y) node[anchor=east] {\y}; 
\draw (0.5, 0.5) node {$H$};
\draw plot [smooth] coordinates { (0,1) (0.8,1) (1,0.8) (1,0)};	
\end{tikzpicture}	
\end{center} 
where $h$ is a smooth function such that $0<h\leq 1$ and 
$J=\{h=1\}\subset [0,1)$. Then 
\[\D=\{(z,w)\in \C^2: (|z|,|w|)\in H\}.\]
We define $\phi(z,w)=\chi(|z|)$ and  
\[\lambda_{nm} 
=\frac{\int_0^1\chi(r)r^{2n+1}(h(r))^{2m+2}dr}{\int_0^1r^{2n+1}(h(r))^{2m+2}dr}
\geq 0\]
for all $n,m=0,1,2,\ldots$. Then one can check that 
\begin{align}\label{Eqn4}
\langle \phi(z,w)z^nw^m-\lambda_{nm}z^nw^m,z^jw^k\rangle =0
\end{align}
for all $n,m,j,k=0,1,2,\ldots$. Then 
\[T_{\phi}z^nw^m=\lambda_{nm}z^nw^m\] 
for all $n\in \mathbb{Z},m=0,1,2,\ldots$. We note that the Bergman kernel 
of $\D$ is 
\[K((\xi,\eta),(z,w)) = 
\sum_{n,m=0}^{\infty} \frac{\xi^n\eta^m}{\delta_{nm}} 
\frac{\zb^n\wb^m}{\delta_{nm}}\] 
where $\delta_{nm}=\|z^nw^m\|$. Then, using \eqref{Eqn4} in the 
second equality below, we get  
\begin{align*}
\langle \phi K(.,(z,w)),K(.,(z,w))\rangle 
= & \sum_{n,m=0}^{\infty} \frac{|z|^{2n}|w|^{2m}}{\delta^4_{nm}} 
\langle \phi(\xi)\xi^n\eta^m,\xi^n\eta^m\rangle \\
=& \sum_{n,m=0}^{\infty} \frac{|z|^{2n}|w|^{2m}}{\delta^4_{nm}} 
\lambda_{nm} \langle \xi^n\eta^m,\xi^n\eta^m\rangle \\
=& \sum_{n,m=0}^{\infty} \frac{|z|^{2n}|w|^{2m}}{\delta^4_{nm}} 
\lambda_{nm}\delta^2_{nm} \\
=&  \sum_{n,m=0}^{\infty} \lambda_{nm} 
\frac{|z|^{2n}|w|^{2m}}{\delta^2_{nm}}. 
\end{align*}
Hence we have 
\[\widetilde{T}_{\phi}(z,w) 
=\frac{\sum_{n,m=0}^{\infty}\lambda_{nm}\frac{|z|^{2n}|w|^{2m}}{\delta^2_{nm}}}
{\sum_{n,m=0}^{\infty}\frac{|z|^{2n}|w|^{2m}}{\delta^2_{nm}}}\]
Next we will compute the norm and essential norm of $T_{\phi}$. 
Since $\chi$ has a compact support in the interior of $J=\{h=1\}$ we have 
\[\lambda_{nm} 
=\frac{\int_J\chi(r)r^{2n+1}dr}{\int_0^1r^{2n+1}(h(r))^{2m+2}dr}\] 
for $n,m=0,1,2,\ldots$ and 
\[\lim_{m\to\infty}\lambda_{nm}=\lambda_{n\infty} 
=\frac{\int_J\chi(r)r^{2n+1}dr}{\int_Jr^{2n+1}dr}<\infty.\] 
Furthermore,  since $0\leq h<1$ on $[0,1]\setminus J$ we have 
\begin{align}\label{EqnSupNorm}
\lambda_{nm} < \lambda_{n(m+1)} < \lambda_{n\infty}
<\|\phi\|_{L^{\infty}(\D)} 
\end{align} 
for all $n,m$.  The last inequality above is due to the fact that 
the probability measure $r^{2n+1}(\int_Jr^{2n+1}dr)^{-1}$ is absolutely 
continuous with respect to the Lebegue measure on $J$ and $\chi$ is a 
non-negative compactly supported function on $J$. Also, 
one can show that the sequence $\{r^{2n+1}(\int_Jr^{2n+1}dr)^{-1}\}$ 
converges to point mass measure $\delta_{\alpha}$ weakly  as $n\to \infty$.   
Hence the fact that the support of $\chi$ is in the interior of $J$ implies that   
\[\lim_{n\to\infty}\lambda_{n\infty}=\lambda_{\infty\infty}=0.\]
Using the fact that the monomials are eigenfunctions 
for $T_{\phi}$ and the set of monomials is a basis for 
$A^2(\D)$ one can show that 
\[\|(T_{\phi}-\lambda I)f\| 
\geq \inf\{|\lambda_{nm}-\lambda|:m,n=0,1,2,\ldots\}\|f\|\]
for all $f\in A^2(\D)$. Hence the spectrum of $T_{\phi}$ is the 
closure of $\{\lambda_{nm}: m,n=0,1,2,\ldots\}$. 
We note that $\lambda_{00}= \widetilde{T}_{\phi}(0,0)$ and 
\eqref{Eqn+Term} implies that 
\begin{align}\label{EqnStrict}
\lambda_{0\infty}<\lambda_{1\infty}.
\end{align}  
Therefore, there exists $n_0\geq 1$ such that  
\[\|T_{\phi}\| = \lambda_{n_0\infty} 
=\max\{\lambda_{n\infty}:n=0,1,2,\ldots \}>0.\] 
Hence 
\begin{align} \label{EqnEigenvalue}
\|\widetilde{T}_{\phi}\|_{L^{\infty}(\D)}
=\sup\left\{\frac{\sum_{n,m=0}^{\infty}\lambda_{nm}
	\frac{|z|^{2n}|w|^{2m}}{\delta^2_{nm}}}
{\sum_{n,m=0}^{\infty}\frac{|z|^{2n}|w|^{2m}}{\delta^2_{nm}}}:(z,w)\in \D \right\} 
<\lambda_{n_0\infty} 
=\|T_{\phi}\|.
\end{align}
The strict  inequality above can be seen as follows: 
Since $\chi$ is supported in $J$ one can show that $\widetilde{T}_{\phi}=0$ 
on $\Gamma\cup \{(z,w)\in b\D:|z|=1\}$.  Therefore, to prove the inequality 
in \eqref{EqnEigenvalue} it is enough to show that 
\[\limsup_{(z_j,w_j)\to(a,b)}\widetilde{T}_{\phi}(z_j,w_j)<\lambda_{n_0\infty}\]
such that  $|b|=1$ and $|a|\in J$.  To that end, let us denote  
\[S(z,w) =\sum_{n,m=0}^{\infty}\lambda_{nm} \frac{|z|^{2n}|w|^{2m}}{\delta^2_{nm}}\] 
and $K(z,w)=\sum_{n,m=0}^{\infty}\frac{|z|^{2n}|w|^{2m}}{\delta^2_{nm}}$. 
We note that $\delta_{(n+N)m}< \delta_{nm}$ for any positive integer $N$. Then 
\[\sum_{m=0}^{\infty}\frac{|w|^{2m}}{\delta^2_{nm}} 
	\leq  \sum_{m=0}^{\infty}\frac{|w|^{2m}}{\delta^2_{(n+N)m}}.\]
First we assume that $a\neq 0$ and we fix a sequence $\{(z_j,w_j)\}$ 
in $\D$ such that $(z_j,w_j)\to(a,b)$ as $j\to\infty$. Then we have  
\begin{align*} 
\frac{|z_j|^{2N}}{K(z_j,w_j)}\sum_{n=0}^{\infty}\sum_{m=0}^{\infty}
\frac{|z_j|^{2n}|w_j|^{2m}}{\delta^2_{nm}}  
\leq \frac{1}{K(z_j,w_j)}\sum_{n=0}^{\infty}
\sum_{m=0}^{\infty}\frac{|z_j|^{2n+2N}|w_j|^{2m}}{\delta^2_{(n+N)m}}.
\end{align*} 
The inequality above together with the fact that 
\[\frac{1}{K(z,w)}\sum_{n,m=0}^{\infty}\frac{|z|^{2n}|w|^{2m}}{\delta^2_{nm}} =1\] 
imply that 
\begin{align}\label{EqnLimInf}
\liminf_{j\to \infty}\frac{1}{K(z_j,w_j)}\sum_{n=N}^{\infty}\sum_{m=0}^{\infty}
	\frac{|z_j|^{2n}|w_j|^{2m}}{\delta^2_{nm}}  \geq |a|^{2N}>0.
\end{align} 
We choose $N$ so that $\lambda_{n\infty}<\lambda_{n_0\infty}/2$ 
for $n\geq N$. Then 
\begin{align*}
\frac{S(z_j,w_j)}{K(z_j,w_j)} 
\leq \frac{\lambda_{n_0\infty}}{K(z_j,w_j)}\sum_{n=0}^{N-1}\sum_{m=0}^{\infty}
\frac{|z_j|^{2n}|w_j|^{2m}}{\delta^2_{nm}}  
+\frac{\lambda_{n_0\infty}}{2K(z_j,w_j)}\sum_{n=N}^{\infty}\sum_{m=0}^{\infty}
\frac{|z_j|^{2n}|w_j|^{2m}}{\delta^2_{nm}}.
\end{align*} 
That is, $S(z_j,w_j)/K(z_j,w_j)$ is bounded from above by a number 
that is a convex combination of $\lambda_{n_0\infty}/2$ and 
$\lambda_{n_0\infty}$ and \eqref{EqnLimInf} implies that in the limit 
the upper bound is strictly less than $\lambda_{n_0\infty}$. Therefore,  we have
\[\limsup_{j\to\infty}\frac{S(z_j,w_j)}{K(z_j,w_j)}<\lambda_{n_0\infty}.\]  
In case $a=0$, we argue as follows: One can show that 
$\delta^2_{0m}\leq \pi^2/(m+1)$ and, by integrating $z^nw^m$ on 
$\{|z|<\alpha\}\times \mathbb{D}$,  we have 
$1/\delta^2_{nm}\leq (n+1)(m+1)/\alpha^{2n+2}\pi^2$. Then for every 
$\ep>0$ there exists $\eta>0$ such that $(n+1)/\alpha^{2n+2}<(\ep/\eta)^{2n}$ 
for all $n$. Then  $\eta^{2n}\delta^2_{0m}/\delta^2_{nm}<\ep^{2n}$ 
for all $n=1,2,3,\ldots$ and all $m$. Hence we have  
\[\frac{|z|^{2n}|w|^{2m}}{\delta^2_{nm}} 
\leq   \frac{\ep^{2n}|w|^{2m}}{\delta^2_{0m}}\] 
for any $|z|<\eta$ and all $n=1,2,3,\ldots$ and all $m$. Therefore,  
for $|z|<\eta$ we have 
\begin{align*}
\frac{S(z,w)}{K(z,w)}
\leq & \frac{1}{K(z,w)}\sum_{n,m=0}^{\infty} 
\frac{\lambda_{nm}\ep^{2n}|w|^{2m}}{\delta^2_{0m}} \\
\leq&   \frac{\sum_{m=0}^{\infty}\lambda_{0m}\frac{|w|^{2m}}{\delta^2_{0m}}
	+\sum_{m=0}^{\infty}\sum_{n=1}^{\infty}\lambda_{nm}\ep^{2n} 
	\frac{|w|^{2m}}{\delta^2_{0m}}}
{\sum_{m=0}^{\infty}\frac{|w|^{2m}}{\delta^2_{0m}}} \\
\leq &\lambda_{0\infty}+\lambda_{n_0\infty}\sum_{n=1}^{\infty}\ep^{2n}.
\end{align*}
Now using  \eqref{EqnStrict} and the fact that $\varepsilon > 0$ above 
is arbitrary, we conclude that 
\[\limsup_{(z,w)\to (0,b)}\frac{S(z,w)}{K(z,w)} 
\leq \lambda_{0\infty}<\lambda_{1\infty}\leq \lambda_{n_0\infty}.\] 
Therefore, and the strict inequality in \eqref{EqnEigenvalue} is verified. 
	
Since $\widetilde{T}_{\phi}$ has continuous extension to $\Gamma$ we have 
\[ \|\widetilde{T}_{\phi}\|_{L^{\infty}(\Gamma)} 
\leq  \|\widetilde{T}_{\phi}\|_{L^{\infty}(\D\cup\Gamma)}
=\|\widetilde{T}_{\phi}\|_{L^{\infty}(\D)}.\] 
We note that each $\lambda_{n\infty}$ is in the essential spectrum 
because they are limits of eigenvalues and $T_{\phi}$ is self-adjoint 
(see \cite[Theorem 1.6 in ch IX]{EdmundsWvansBook}).  
So  $\|T_{\phi}\|_e=\lambda_{n_0\infty}$ and, therefore, using 
\eqref{EqnSupNorm} and \eqref{EqnEigenvalue} we get  
\[ \|\widetilde{T}_{\phi}\|_{L^{\infty}(\Gamma)} 
\leq  \|\widetilde{T}_{\phi}\|_{L^{\infty}(\D)}<\|T_{\phi}\|_e
=\|T_{\phi}\|
=\lambda_{n_0\infty}  
<\|\phi\|_{L^{\infty}(b\D)}=\|\chi\|_{L^{\infty}(J)}.\]
\end{example}

\section{Acknowledgment} 
The authors are indebted to the referee for reading the paper carefully, 
for pointing out a gap in the earlier version of the paper and for suggestions 
that improved Theorem \ref{ThmProductDomains} and the overall presentation. 
They would also like to thank Trieu Le who noticed an inaccuracy in 
Example \ref{ExampleC2}. 


\end{document}